\newtheorem{theorem}{Theorem}[section]
\newtheorem{lemma}[theorem]{Lemma}
\newtheorem{cor}[theorem]{Corollary}
\theoremstyle{definition}
\newtheorem{definition}[theorem]{Definition}
\theoremstyle{remark}
\numberwithin{equation}{section}
\begin{document}

\title[Anti-holomorphic semi-invariant submersions]{Anti-holomorphic semi-invariant\\
submersions from  K\"{a}hlerian manifolds}

\author{Hakan Mete Ta\c stan}

\address{\.Istanbul University\\
Department of Mathematics\\
Vezneciler, \.Istanbul, Turkey}

\email{hakmete@istanbul.edu.tr}

\subjclass[2000]{Primary 53C15, 53B20}

\keywords{Riemannian submersion, anti-holomorphic semi-invariant
submersion, horizontal distribution, K\"{a}hlerian manifold.}
\begin{abstract}
We study anti-holomorphic semi-invariant submersions from
K\"{a}hlerian manifolds onto Riemannian manifolds. We prove that all
distributions which are involved in the definition of the submersion
are integrable. We also prove that the O'Neill's tensor
$\mathcal{T}$ vanishes on the invariant vertical distribution. We
give  necessary and sufficient conditions for totally geodesicness
and harmonicity of this type submersions. Moreover, we investigate
the several curvatures of the total manifold and fibers and give a
characterization theorem.
\end{abstract}
\maketitle
\section{Introduction}
The theory of Riemannian submersions were initiated by O'Neill
\cite{O} and  Gray \cite{Gr}. In \cite{Wat}, the Riemannian
submersions were considered between almost Hermitian manifolds by
Watson under the name of almost Hermitian submersions. In this case,
the Riemannian submersion is also an almost complex mapping and
consequently the vertical and horizontal distribution are invariant
with respect to the almost complex structure of the total manifold
of the submersion. Afterwards, almost Hermitian submersions have
been actively studied between different subclasses of almost
Hermitian manifolds, for example, see \cite{Fal}. Most of the
studies related to Riemannian or almost Hermitian submersions can be
found in the book \cite{Fa}. The study of anti-invariant Riemannian
submersions from almost Hermitian manifolds were initiated by
\d{S}ahin \cite{Sah}. In this case, the fibres are anti-invariant
with respect to the almost complex structure of the total manifold.
He studied this type submersions from a K\"{a}hlerian manifold onto
a Riemannian manifold. Recently, Shahid and Tanveer \cite{Sha}
extended this notion to the case when the total manifold is nearly
K\"{a}hlerian. A Lagrangian submersion is a special case of an
anti-invariant Riemannian submersion such that the almost complex
structure of the total manifold reverses the vertical and horizontal
distributions. In \cite{Ta}, we studied Lagrangian submersions in
detail. There are some other recent paper which involve other
structures such as almost product \cite{Gu}, almost contact
\cite{Le}, and Sasakian \cite{Km}. In any cases, the definition of
anti-invariant Riemannian submersion is the same as the above
definition. Besides there are many other notions related with that
of anti-invariant Riemannian submersion, such as slant submersion
\cite{Sa1}, semi-invariant submersion \cite{Sa} and semi-slant
submersion \cite{Pa}. In particular, the notion of semi-invariant is
a natural generalization of the notion anti-invariant submersion. In
this paper, we consider semi-invariant submersions from a
K\"{a}hlerian manifold onto a Riemannian manifold in a special case.

\section{Riemannian submersions}

In this section,  we give necessary background for
Riemannian submersions.\\

Let $(M,g)$ and $(N,g_{\text{\tiny$N$}})$ be Riemannian manifolds,
where $dim(M)>dim(N)$. A surjective mapping
$\pi:(M,g)\rightarrow(N,g_{N})$ is called a \emph{Riemannian
submersion}
\cite{O} if:\\

(S1) $\pi$ has maximal rank, and \\

(S2) $\pi_{*}$, restricted to $(ker\pi_{*})^{\bot},$ is a linear
isometry.\\

In this case, for each $y\in N$, $\pi^{-1}(y)$ is a $k$-dimensional
submanifold of $M$ and called \emph{fiber}, where $k=dim(M)-dim(N).$
A vector field on $M$ is called \emph{vertical} (resp.
\emph{horizontal}) if it is always tangent (resp. orthogonal) to
fibers. A vector field $X$ on $M$ is called \emph{basic} if $X$ is
horizontal and $\pi$-related to a vector field $X_{*}$ on $N,$ i.e.,
$\pi_{*}X_{x}=X_{*\pi(x)}$ for all $x\in M.$ As usual, we denote by
$\mathcal{V}$ and $\mathcal{H}$ the projections on the vertical
distribution $ker\pi_{*}$ and the horizontal distribution
$(ker\pi_{*})^{\bot},$ respectively. The geometry of Riemannian
submersions is characterized by O'Neill's tensors $\mathcal{T}$ and
$\mathcal{A}$, defined as follows:
\begin{equation}\label{testequation}
\mathcal{T}_{E}F=\mathcal{V}\nabla_{\mathcal{V}E}\mathcal{H}F+\mathcal{H}\nabla_{\mathcal{V}E}\mathcal{V}F,
\end{equation}
\begin{equation}\label{testequation}
\mathcal{A}_{E}F=\mathcal{V}\nabla_{\mathcal{H}E}\mathcal{H}F+\mathcal{H}\nabla_{\mathcal{H}E}\mathcal{V}F
\end{equation}
for any vector fields $E$ and $F$ on $M,$ where $\nabla$ is the
Levi-Civita connection of $g_{\text{\tiny$M$}}.$ It is easy to see
that $\mathcal{T}_{E}$ and $\mathcal{A}_{E}$ are skew-symmetric
operators on the tangent bundle of $M$ reversing the vertical and
the horizontal distributions. We summarize the properties of the
tensor fields $\mathcal{T}$ and $\mathcal{A}$. Let $U,V$ be
vertical and $\xi,\eta$ be horizontal vector fields on $M$, then we
have
\begin{equation}\label{testequation}
\mathcal{T}_{U}V=\mathcal{T}_{V}U,
\end{equation}
\begin{equation}\label{testequation}
\mathcal{A}_{\xi}\eta=-\mathcal{A}_{\eta}\xi=\frac{1}{2}\mathcal{V}[\xi,\eta].
\end{equation}
On the other hand, from (2.1) and (2.2), we obtain
\begin{equation}\label{testequation}
\nabla_{U}V=\mathcal{T}_{U}V+\hat{\nabla}_{U}V,
\end{equation}
\begin{equation}\label{testequation}
\nabla_{U}\xi=\mathcal{T}_{U}\xi+\mathcal{H}\nabla_{U}\xi,
\end{equation}
\begin{equation}\label{testequation}
\nabla_{\xi}U=\mathcal{A}_{\xi}U+\mathcal{V}\nabla_{\xi}U,
\end{equation}
\begin{equation}\label{testequation}
\nabla_{\xi}\eta=\mathcal{H}\nabla_{\xi}\eta+\mathcal{A}_{\xi}\eta,
\end{equation}
where $\hat{\nabla}_{U}V=\mathcal{V}\nabla_{U}V$ and
$\mathcal{H}\nabla_{V}\xi=\mathcal{A}_{\xi}V$, if $\xi$ is basic. It
is not difficult to observe that $\mathcal{T}$ acts on the fibers as
the second fundamental form while $\mathcal{A}$  acts on the
horizontal distribution and measures of the obstruction to the
integrability of this distribution. For  details on the Riemannian
submersions, we refer to O'Neill's paper \cite{O} and to
the book \cite{Fa}.\\

 Finally, we recall that the notion of the second fundamental form
of a map between Riemannian manifolds. Let $(M,g)$ and
$(N,g_{\text{\tiny$N$}})$ be Riemannian manifolds and
$\varphi:(M,g)\rightarrow(N,g_{\text{\tiny$N$}})$ a smooth map. Then
the second fundamental form of $\varphi$ is given by
$$\quad\quad\quad\quad(\nabla\varphi_{*})(E,F)=\nabla^{\varphi}_{E}\varphi_{*}F-\varphi_{*}(E,F)$$
for $E,F\in TM,$ where $\nabla^{\varphi}$ is the pull back
connection and we denoted conveniently by $\nabla$ the Riemannian
connections of the metrics $g$ and $g_{\text{\tiny$N$}}$ \cite{Ba}.

\section{Anti-holomorphic semi-invariant submersions}
A smooth manifold $M$ is called almost Hermitian \cite{Yan} if its
tangent bundle has an almost complex structure $J$ and a Riemannian
metric $g$ such that
\begin{equation}\label{testequation}
\quad g(E,F)=g(JE,JF)
\end{equation}
for any vector fields $E, F\in T{M}$, where $T{M}$ is the tangent
bundle of ${M}$. Before, giving our definition recall that the
definition of semi-invariant submersion.
\begin{definition} (\cite{Sa}) Let $M$ be a $2m$-dimensional almost Hermitian manifold with
Hermitian metric $g$ and almost complex structure $J$, and $N$ be a
Riemannian manifold with Riemannian metric $g_{\text{\tiny$N$}}.$ A
Riemannian submersion $\pi:(M,g,J)\rightarrow
(N,g_{\text{\tiny$N$}})$ is called a \emph{semi-invariant
submersion} if there is a distribution $\mathcal{D}\subset
ker\pi_{*}$ such that
\begin{equation} ker\pi_{*}=\mathcal{D}\oplus\mathcal{D}^{\bot},
\quad J(\mathcal{D})=\mathcal{D},\quad
J(\mathcal{D}^{\bot})\subset(ker\pi_{*})^{\bot},
\end{equation}
where $\mathcal{D}^{\bot}$ is the orthogonal complement of
$\mathcal{D}$ in $ker\pi_{*}$.
\end{definition}
In this case, the horizontal distribution $(ker\pi_{*})^{\bot}$ is
decomposed as
\begin{equation} (ker\pi_{*})^{\bot}=J(\mathcal{D}^{\bot})\oplus\mu,
\end{equation}
where $\mu$ is the orthogonal complementary distribution of
$J(\mathcal{D}^{\bot})$ in $(ker\pi_{*})^{\bot}$ and it is invariant
with respect to $J.$ Note that, a semi-invariant Riemannian
submersion is a natural generalization of an anti-invariant
Riemannian submersion \cite{Sah}. For the details, see \cite{Sah,
Sa}.

\begin{definition} Let $\pi:(M,g,J)\rightarrow(N,g_{\text{\tiny$N$}})$ be a semi-invariant submersion.
Then we call $\pi$ an \emph{anti-holomorphic semi-invariant
submersion}, if $(ker\pi_{*})^{\bot}=J(\mathcal{D}^{\bot})$, i.e.,
$\mu=\{0\}.$
\end{definition}
Suppose the dimension of distribution $\mathcal{D}^{\bot}$ (resp.
$\mathcal{D}^{\bot}$) is $2p$ (resp. $q$). Then, we have
$dim(M)=2p+2q$ and $dim(N)=q.$ An anti-holomorphic semi-invariant
submersion is called a \emph{proper
anti-holomorphic semi-invariant submersion} if $p\neq0$ and $q\neq0.$\\

\noindent {\bf Example.} Define
\quad$\pi:\mathbb{R}^{4}\rightarrow\mathbb{R}$\quad by
\quad$\pi(x_{1},x_{2},x_{3},x_{4})=\displaystyle\frac{x_{3}-x_{4}}{\sqrt{2}}.$\\

Then the map $\pi$ is a proper anti-holomorphic semi-invariant
submersion such
that\\

$ker\pi_{*}=\mathcal{D}\oplus\mathcal{D}^{\bot},$ where
$\mathcal{D}=span \{\partial_{1},\partial_{2}\}$,
$\mathcal{D}^{\bot}=span\{\partial_{3}+\partial_{4}\}$,\\

and $ker\pi_{*}^{\bot}=span \{\partial_{4}-\partial_{3}\}$,
$\partial_i={\partial\over \partial{x_i}}.$

\section{Anti-holomorphic semi-invariant\\
submersions from K\"{a}hlerian manifolds} In this section, we start
to study anti-holomorphic semi-invariant submersions from
K\"{a}hlerian manifolds. An almost Hermitian manifold $(M,g,J)$ is
called a\emph{ K\"{a}hlerian manifold } if
\begin{equation}\label{testequation}
(\nabla_{E}J)F=0
\end{equation}
for all $E,F\in TM.$ Let $(M,g,J)$ be a K\"{a}hlerian manifold and
$(N,g_{\text{\tiny$N$}})$ be a Riemannian manifold. Now we examine
how the K\"{a}hlerian structure on $M$ places restrictions on the
tensor fields $\mathcal{T}$ and $\mathcal{A}$ of an anti-holomorphic
semi-invariant submersion $\pi:(M,g,J)\rightarrow
(N,g_{\text{\tiny$N$}})$. In \cite{Ta}, we proved that the following
lemma for Lagrangian submersions. For the details of Lagrangian
submersions, see \cite{Sah,Ta}.
\begin{lemma}(\cite{Ta})
Let $\pi$ be a Lagrangian submersion from a K\"{a}hlerian manifold
$(M,g,J)$ onto a Riemannian manifold $(N,g_{\text{\tiny$N$}})$. Then
we have
$$\textbf{a)}\quad\mathcal{T}_{V}JE=J\mathrm{T}_{V}E\quad\quad\quad\quad\textbf{b)}\quad\mathcal{A}_{\xi}JE=J\mathcal{A}_{\xi}E,$$
where $V$ is a vertical vector field, $\xi$ is a horizontal vector
field, and $E$ is a vector field on $M.$
\end{lemma}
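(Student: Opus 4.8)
The plan is to exploit the two defining characteristics of the setup: the K\"{a}hler condition $(\nabla_E J)F = 0$, which says $\nabla_E(JF) = J\nabla_E F$, and the Lagrangian condition, under which the almost complex structure $J$ interchanges the vertical distribution $ker\pi_*$ and the horizontal distribution $(ker\pi_*)^\bot$. In particular, if $V$ is vertical then $JV$ is horizontal, and vice versa, so $J$ swaps the roles of $\mathcal{V}$ and $\mathcal{H}$ as projections; symbolically $\mathcal{H}(JW) = J(\mathcal{V}W)$ and $\mathcal{V}(JW) = J(\mathcal{H}W)$ for any vector field $W$. I expect to prove both (a) and (b) by decomposing $E$ into its vertical and horizontal parts and checking the identity on each part, using the definitions \eqref{testequation} of $\mathcal{T}$ and $\mathcal{A}$ together with the K\"{a}hler parallelism.

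First I would prove (a). Write $E = \mathcal{V}E + \mathcal{H}E$. For the vertical part, I start from the definition $\mathcal{T}_V(\mathcal{H}E) = \mathcal{V}\nabla_V \mathcal{H}E$ plus the term handling the vertical component, apply $J$ to the defining expression $\mathcal{T}_V E = \mathcal{V}\nabla_V \mathcal{H}E + \mathcal{H}\nabla_V \mathcal{V}E$, and then compute $\mathcal{T}_V(JE)$ directly. The key move is to push $J$ through the Levi-Civita connection using the K\"{a}hler identity $\nabla_V(JE) = J\nabla_V E$, and then to push $J$ through the projections $\mathcal{V}$ and $\mathcal{H}$ using the Lagrangian swap relations above. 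Because $J$ turns the vertical projection of $\nabla_V E$ into the horizontal projection of $J\nabla_V E$ and conversely, the two summands in the definition of $\mathcal{T}$ get exchanged in exactly the way needed so that $\mathcal{T}_V(JE)$ reassembles into $J\mathcal{T}_V E$. The same computation with $\mathcal{A}$ in place of $\mathcal{T}$, now differentiating along a horizontal direction $\xi$, gives (b); the structure of the argument is identical since $\mathcal{A}$ and $\mathcal{T}$ are defined by the same pattern with $\mathcal{H}\xi$ in place of $\mathcal{V}V$.

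The main obstacle, and the step to be most careful about, is the bookkeeping of the projections under $J$: one must verify rigorously that $J\mathcal{V} = \mathcal{H}J$ and $J\mathcal{H} = \mathcal{V}J$ in the Lagrangian case (this uses that $J$ is an isometry by \eqref{testequation}, hence preserves orthogonality, so it maps the vertical distribution isometrically onto the horizontal one and back), and then to track which of the two summands in \eqref{testequation} survives when $E$ is purely vertical versus purely horizontal. Once that swap is established the rest is a direct substitution with no genuine difficulty. I would handle generality in $E$ by noting that both sides of each identity are $\mathbb{R}$-linear in $E$ and tensorial in the appropriate sense, so it suffices to check the vertical and horizontal cases separately and add, rather than treating an arbitrary $E$ at once.
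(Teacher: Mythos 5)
Your proof is correct: in the Lagrangian case $J$ does satisfy the swap relations $\mathcal{H}(JW)=J\mathcal{V}W$ and $\mathcal{V}(JW)=J\mathcal{H}W$, and combining these with the K\"{a}hler identity $\nabla_{V}(JE)=J\nabla_{V}E$ and checking the vertical and horizontal parts of $E$ separately reassembles $\mathcal{T}_{V}JE$ into $J\mathcal{T}_{V}E$ (and likewise for $\mathcal{A}_{\xi}$), exactly as you describe. The paper itself gives no proof here (it defers to \cite{Ta}), but the argument in that reference is the same standard one---K\"{a}hler parallelism plus the $J$-interchange of the two distributions, compared componentwise via the O'Neill equations---so your approach coincides with it.
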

It is easy to show that this lemma also holds for anti-holomorphic
semi-invariant submersions.

\section{Integrability and Totally Geodesicness }

In this section, we shall study the integrability and totally
geodesicness of the distributions which are involved in the
definition of an anti-holomorphic semi-invariant submersion.\\

In \cite{Sa}, \c{S}ahin proved that the following.
\begin{lemma}(\cite{Sa})
Let $\pi$ be a semi-invariant submersion from a K\"{a}hlerian
manifold $(M,g,J)$ onto a Riemannian manifold
$(N,g_{\text{\tiny$N$}})$. Then

\textbf{a)} The anti-invariant distribution $\mathcal{D}^{\bot}$ is
always integrable.

\textbf{b)} The invariant distribution $\mathcal{D}$ is integrable
if and only if

\quad\quad\quad\quad\quad\quad\quad\quad$g(\mathcal{T}_{Z}JW-\mathcal{T}_{W}JZ,JX)=0$

for $Z,W\in\mathcal{D}$ and $X\in\mathcal{D}^{\bot}.$
\end{lemma}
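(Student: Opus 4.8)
The plan is to handle both parts through the single observation that, since $\mathcal{D}$ and $\mathcal{D}^{\perp}$ are subbundles of the integrable distribution $\ker\pi_{*}$, the Lie bracket of two vertical fields is again vertical; hence in each case the only issue is the vanishing of one mixed component of the bracket. Concretely, for part (a) I would fix $X,Y\in\mathcal{D}^{\perp}$ and test $[X,Y]$ against an arbitrary $Z\in\mathcal{D}$, whereas for part (b) I would fix $Z,W\in\mathcal{D}$ and test $[Z,W]$ against an arbitrary $X\in\mathcal{D}^{\perp}$. In the anti-holomorphic setting $J$ carries $\mathcal{D}$ onto $\mathcal{D}$ and $\mathcal{D}^{\perp}$ onto the whole horizontal space, and this is precisely what lets me convert every inner product into one involving the tensor $\mathcal{T}$.

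For part (a), I would start from $g([X,Y],Z)=g(\nabla_{X}Y,Z)-g(\nabla_{Y}X,Z)$ and rewrite $g(\nabla_{X}Y,Z)=g(\nabla_{X}JY,JZ)$ using the Hermitian property (3.1) together with the K\"{a}hler condition (4.1) to move $J$ through the connection. Since $JY$ is horizontal and $JZ\in\mathcal{D}$ is vertical, formula (2.6) isolates the $\mathcal{T}$-term, giving $g(\nabla_{X}JY,JZ)=g(\mathcal{T}_{X}JY,JZ)$, and the symmetric computation produces the term for $\nabla_{Y}X$. Applying the identity $\mathcal{T}_{V}(JE)=J\mathcal{T}_{V}E$ established in the previous section, each term collapses to $g(\mathcal{T}_{X}Y,Z)$ and $g(\mathcal{T}_{Y}X,Z)$ respectively, so the symmetry $\mathcal{T}_{X}Y=\mathcal{T}_{Y}X$ of (2.3) forces $g([X,Y],Z)=0$. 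As $Z\in\mathcal{D}$ was arbitrary and $[X,Y]$ is vertical, this shows $[X,Y]\in\mathcal{D}^{\perp}$, i.e. $\mathcal{D}^{\perp}$ is integrable.

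For part (b), I would run the same computation with the roles reversed: from $g([Z,W],X)=g(\nabla_{Z}W,X)-g(\nabla_{W}Z,X)$, the Hermitian and K\"{a}hler identities give $g(\nabla_{Z}W,X)=g(\nabla_{Z}JW,JX)$, and now $JW\in\mathcal{D}$ is vertical while $JX$ is horizontal, so the decomposition (2.5) of $\nabla_{Z}JW$ into its $\mathcal{T}$-part and its $\hat{\nabla}$-part leaves only $g(\mathcal{T}_{Z}JW,JX)$. Treating the second term the same way yields exactly
\[
g([Z,W],X)=g(\mathcal{T}_{Z}JW-\mathcal{T}_{W}JZ,\,JX).
\]
Since $[Z,W]$ is automatically vertical, $\mathcal{D}$ is integrable precisely when the $\mathcal{D}^{\perp}$-component of every such bracket vanishes, i.e. when the right-hand side is zero for all $X\in\mathcal{D}^{\perp}$; this is the stated criterion, so both implications follow simultaneously.

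The calculations themselves are short, so the point requiring the most care is bookkeeping of the distributions: one must consistently use that $J(\mathcal{D})=\mathcal{D}$ keeps $JZ,JW$ vertical, that $J(\mathcal{D}^{\perp})$ exhausts the horizontal bundle so that $JX,JY$ are horizontal, and that $\ker\pi_{*}$ is integrable so that no horizontal component of a bracket of vertical fields can appear. The only genuinely non-formal ingredient is the cancellation in part (a), which hinges on the relation $\mathcal{T}_{V}(JE)=J\mathcal{T}_{V}E$ combined with the symmetry of $\mathcal{T}$ on vertical fields; everything else is a direct application of (2.5), (2.6), (3.1) and (4.1).
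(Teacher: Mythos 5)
Your part (b) is correct and, despite your framing, fully general: the computation $g([Z,W],X)=g(\mathcal{T}_{Z}JW-\mathcal{T}_{W}JZ,JX)$ uses only $J(\mathcal{D})=\mathcal{D}$, $J(\mathcal{D}^{\bot})\subset(\ker\pi_{*})^{\bot}$, the K\"ahler condition and (2.5), none of which require $\mu=\{0\}$, so it establishes the stated equivalence for every semi-invariant submersion. (Note that the paper itself gives no proof of this lemma; it is quoted from \cite{Sa}, so the comparison is with the general argument that reference supplies.)

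Part (a), however, has a genuine gap: the lemma is stated for an arbitrary semi-invariant submersion, where $(\ker\pi_{*})^{\bot}=J(\mathcal{D}^{\bot})\oplus\mu$ with $\mu$ possibly nonzero, but your cancellation step invokes the identity $\mathcal{T}_{V}JE=J\mathcal{T}_{V}E$ of Lemma 4.1, which the paper asserts only for Lagrangian and anti-holomorphic semi-invariant submersions --- and you say explicitly that you are working ``in the anti-holomorphic setting.'' That identity genuinely fails when $\mu\neq\{0\}$; indeed, if it held in general, then in your part (b) criterion one would get $g(\mathcal{T}_{Z}JW,JX)=g(J\mathcal{T}_{Z}W,JX)=g(\mathcal{T}_{Z}W,X)=0$ (as $\mathcal{T}_{Z}W$ is horizontal and $X$ vertical), so $\mathcal{D}$ would always be integrable and part (b) would be vacuous --- precisely the conclusion that Lemma 5.2 reserves for the anti-holomorphic case. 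In effect you have proved Lemma 5.2(a), not the cited Lemma 5.1(a). The missing general ingredient is the identity $\mathcal{T}_{X}JY=\mathcal{T}_{Y}JX$ for $X,Y\in\mathcal{D}^{\bot}$ (the submersion version of Chen's lemma for CR-submanifolds, the fibers here being CR-submanifolds of $M$), which follows for every vertical $W$ from
\begin{align*}
g(\mathcal{T}_{X}JY,W)&=-g(JY,\mathcal{T}_{X}W)=-g(JY,\mathcal{T}_{W}X)=g(\nabla_{W}JY,X)\\
&=-g(\nabla_{W}Y,JX)=-g(\mathcal{T}_{W}Y,JX)=-g(\mathcal{T}_{Y}W,JX)=g(\mathcal{T}_{Y}JX,W),
\end{align*}
using only the skew-symmetry of $\mathcal{T}_{U}$, the symmetry (2.3), metric compatibility and the K\"ahler condition (4.1). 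Inserting this identity into your (correct and general) reduction $g([X,Y],Z)=g(\mathcal{T}_{X}JY-\mathcal{T}_{Y}JX,JZ)$ closes the gap and proves part (a) in the stated generality.
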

Thus, using Lemma 4.1 and (2.3), we easily conclude that the
following result from Lemma 5.1.
\begin{lemma} Let $\pi$ be an anti-holomorphic semi-invariant submersion from a K\"{a}hlerian
manifold $(M,g,J)$ onto a Riemannian manifold
$(N,g_{\text{\tiny$N$}})$. Then

\textbf{a)} The anti-invariant distribution $\mathcal{D}^{\bot}$ is
always integrable.

\textbf{b)} The invariant distribution $\mathcal{D}$ is always
integrable.
\end{lemma}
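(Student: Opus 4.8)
The statement splits into two parts, both of which I would obtain by specializing the general semi-invariant criterion of Lemma 5.1 to the anti-holomorphic case $\mu=\{0\}$. Part (a) needs no argument beyond an observation: an anti-holomorphic semi-invariant submersion is in particular a semi-invariant submersion, so the integrability of $\mathcal{D}^{\bot}$ granted by Lemma 5.1(a) transfers immediately. I would simply record this and pass to part (b), which is the substantive half.

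For part (b) the plan is to verify that the criterion of Lemma 5.1(b), namely $g(\mathcal{T}_Z JW-\mathcal{T}_W JZ,JX)=0$ for all $Z,W\in\mathcal{D}$ and $X\in\mathcal{D}^{\bot}$, holds automatically. The structural fact I would exploit is that $\mathcal{D}$ is $J$-invariant, so $JZ$ and $JW$ are again vertical vector fields belonging to $\mathcal{D}$; this is precisely what lets Lemma 4.1 enter. Applying its first identity $\mathcal{T}_V JE=J\mathcal{T}_V E$ with $(V,E)=(Z,W)$ and with $(V,E)=(W,Z)$, I would rewrite
$$\mathcal{T}_Z JW-\mathcal{T}_W JZ=J\mathcal{T}_Z W-J\mathcal{T}_W Z=J(\mathcal{T}_Z W-\mathcal{T}_W Z).$$
The symmetry of $\mathcal{T}$ on vertical fields, equation (2.3), then gives $\mathcal{T}_Z W=\mathcal{T}_W Z$, so the difference is $J(0)=0$ and the criterion is verified for every $X\in\mathcal{D}^{\bot}$. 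Since $[Z,W]$ is automatically vertical (the kernel distribution is integrable), the vanishing of its $\mathcal{D}^{\bot}$-component forces $[Z,W]\in\mathcal{D}$, i.e.\ $\mathcal{D}$ is integrable.

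The step that I expect to demand the most care — the genuine obstacle — is the application of Lemma 4.1(a) on the invariant distribution, so before trusting the one-line reduction above I would re-derive that identity directly from the K\"{a}hler condition $(\nabla_Z J)W=0$ and then project onto $\mathcal{H}$ and $\mathcal{V}$. The delicate point is the horizontal/vertical bookkeeping: $J$ preserves verticality on $\mathcal{D}$ but carries $\mathcal{D}^{\bot}$ into the horizontal space $J(\mathcal{D}^{\bot})$, so when splitting $J\nabla_Z W$ one must track exactly which summands are horizontal. It is here that the anti-holomorphic hypothesis $\mu=\{0\}$ is essential, and here that one should confirm $\mathcal{T}_Z JW$ and $J\mathcal{T}_Z W$ are being compared within the same subbundle. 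I would expect this computation to yield, as a byproduct, that $\mathcal{T}$ vanishes on $\mathcal{D}$, consistent with the claim advertised in the abstract.
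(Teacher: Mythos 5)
Your proposal is correct and follows essentially the same route as the paper: the author likewise obtains part (a) directly from Lemma 5.1(a), and part (b) by combining Lemma 4.1(a) (valid since $\mathcal{D}$ is $J$-invariant, so $JZ, JW$ remain vertical) with the symmetry (2.3) to get $\mathcal{T}_Z JW-\mathcal{T}_W JZ=J(\mathcal{T}_Z W-\mathcal{T}_W Z)=0$, verifying \c{S}ahin's criterion. Your extra caution about re-deriving Lemma 4.1 in the anti-holomorphic setting corresponds exactly to the paper's remark that Lemma 4.1 ``also holds for anti-holomorphic semi-invariant submersions,'' so nothing in your argument departs from the paper's proof.
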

Now, we state one of the main results.
\begin{theorem} Let $\pi$ be an anti-holomorphic semi-invariant submersion from a K\"{a}hlerian
manifold $(M,g,J)$ onto a Riemannian manifold
$(N,g_{\text{\tiny$N$}})$. Then horizontal distribution
$(ker\pi_{*})^{\bot}$ is integrable and totally geodesic, i.e.,
$\mathcal{A}\equiv0.$
\end{theorem}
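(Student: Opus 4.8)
The plan is to prove the single statement $\mathcal{A}\equiv0$, since both the integrability and the total geodesicness of $(ker\pi_{*})^{\bot}$ are read off from $\mathcal{A}$: by (2.4) we have $\mathcal{A}_{\xi}\eta=\frac{1}{2}\mathcal{V}[\xi,\eta]$ for horizontal $\xi,\eta$, so the vanishing of these values is exactly integrability, and then (2.8) gives $\nabla_{\xi}\eta=\mathcal{H}\nabla_{\xi}\eta$, i.e. total geodesicness. Moreover $\mathcal{A}_{\xi}$ is skew-symmetric and interchanges the two distributions, so $g(\mathcal{A}_{\xi}U,\eta)=-g(U,\mathcal{A}_{\xi}\eta)$; hence if $\mathcal{A}_{\xi}\eta=0$ for all horizontal $\xi,\eta$ then also $\mathcal{A}_{\xi}U=0$, and the whole tensor vanishes. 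Since $\mu=\{0\}$, every horizontal field is of the form $JX$ with $X\in\mathcal{D}^{\bot}$, so the task reduces to $\mathcal{A}_{JX}(JY)=0$ for $X,Y\in\mathcal{D}^{\bot}$. Applying Lemma 4.1(b), $\mathcal{A}_{JX}(JY)=J\mathcal{A}_{JX}Y$, so it suffices to prove $\mathcal{A}_{JX}Y=0$.

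The vector $\mathcal{A}_{JX}Y$ is horizontal, hence lies in $J(\mathcal{D}^{\bot})$, so it is determined by the scalars $B(X;Y,U):=g(\mathcal{A}_{JX}Y,JU)$ as $U$ ranges over $\mathcal{D}^{\bot}$, and I would show that these all vanish by extracting two \emph{different} symmetries. First, the antisymmetry $\mathcal{A}_{JX}(JY)=-\mathcal{A}_{JY}(JX)$ from (2.4) combined with Lemma 4.1(b) yields $\mathcal{A}_{JX}Y=-\mathcal{A}_{JY}X$, so $B(X;Y,U)=-B(Y;X,U)$, i.e. $B$ is antisymmetric in its first two slots. Second, using the skew-symmetry of the operator $\mathcal{A}_{JX}$, then Lemma 4.1(b), and finally that $J$ is skew-adjoint for $g$ (a consequence of (3.1)), one gets $g(\mathcal{A}_{JX}Y,JU)=-g(Y,\mathcal{A}_{JX}(JU))=-g(Y,J\mathcal{A}_{JX}U)=g(JY,\mathcal{A}_{JX}U)$, that is $B(X;Y,U)=B(X;U,Y)$, so $B$ is symmetric in its last two slots.

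A trilinear expression antisymmetric in one pair of arguments and symmetric in an overlapping pair must vanish: chaining the two identities gives $B(X;Y,U)=B(X;U,Y)=-B(U;X,Y)=-B(U;Y,X)=B(Y;U,X)=B(Y;X,U)=-B(X;Y,U)$, whence $B\equiv0$. Therefore $\mathcal{A}_{JX}Y=0$, so $\mathcal{A}_{JX}(JY)=J\mathcal{A}_{JX}Y=0$, hence $\mathcal{A}_{\xi}\eta=0$ for all horizontal $\xi,\eta$; by the skew-symmetry noted above $\mathcal{A}\equiv0$, and the integrability and total geodesicness of $(ker\pi_{*})^{\bot}$ follow at once.

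I expect the main obstacle to be precisely the point that makes these two symmetries count as independent input. A naive expansion of $g(\nabla_{JX}(JY),U)$ using only the K\"ahler condition, metric compatibility, and Lemma 4.1(b) runs in a circle and returns the trivial identity $0=0$, because in this setting those ingredients amount to a single relation. The decisive move is to read one symmetry off the antisymmetry of $\mathcal{A}$ in its horizontal arguments and the other off the skew-symmetry of the operator $\mathcal{A}_{JX}$, and then to finish with the purely algebraic vanishing argument rather than with further differential computation.
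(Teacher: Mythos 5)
Your proof is correct, and it is worth noting that the paper gives no argument of its own here: its ``proof'' is a one-line deferral to Theorem 4.5 of \cite{Ta} (the Lagrangian case). Your write-up is essentially that omitted argument transplanted to the present setting, with $\mathcal{D}^{\bot}$ playing the role of the whole vertical distribution: the reduction of $\mathcal{A}_{\xi}U$ to $\mathcal{A}_{\xi}\eta$ via skew-symmetry, the use of $(ker\pi_{*})^{\bot}=J(\mathcal{D}^{\bot})$, the two symmetries of $B$ extracted from (2.4) and from the skew-symmetry of the operator $\mathcal{A}_{JX}$ (each combined with Lemma 4.1(b)), and the six-step permutation chain are exactly the right toolkit; your diagnosis that any shorter computation circles back to $0=0$ is also accurate. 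One dependency deserves to be made explicit, because in this setting Lemma 4.1(b) carries much more weight than in the Lagrangian case. Applied with an argument $Y\in\mathcal{D}^{\bot}$, the identity $\mathcal{A}_{JX}(JY)=J\mathcal{A}_{JX}Y$ forces the a priori vertical vector $\mathcal{A}_{JX}(JY)$ to lie in $\mathcal{D}^{\bot}$ (the right-hand side lies there automatically, since $\mathcal{A}_{JX}Y$ is horizontal and $J$ carries $(ker\pi_{*})^{\bot}$ onto $\mathcal{D}^{\bot}$); equivalently, that instance of the lemma is the assertion that $\mathcal{A}_{\xi}\eta$ has no component along the invariant distribution $\mathcal{D}$. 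When $\mathcal{D}=\{0\}$ this is vacuous and the lemma is an immediate consequence of $\nabla J=0$, but here it is a nontrivial piece of the theorem itself, and the paper only asserts (``it is easy to show'') that Lemma 4.1 extends to anti-holomorphic semi-invariant submersions. Since the paper places Lemma 4.1 before Theorem 5.3 and uses it freely elsewhere, you are entitled to cite it as you do, so this is not a gap relative to the paper's framework; but a careful version of your proof should record that the trilinear argument kills only the $\mathcal{D}^{\bot}$-component of $\mathcal{A}_{\xi}\eta$, the $\mathcal{D}$-component being exactly what the cited lemma supplies.
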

\begin{proof} It is very similar to the proof of Theorem 4.5(\cite{Ta}), so we omit it.
\end{proof}
We remark that the vertical distribution $ker\pi_{*}$ is always
integrable.
\begin{lemma} Let $\pi$ be an anti-holomorphic semi-invariant submersion from a K\"{a}hlerian
manifold $(M,g,J)$ onto a Riemannian manifold
$(N,g_{\text{\tiny$N$}})$. Then the anti-invariant distribution
$\mathcal{D}^{\bot}$ defines a totally geodesic foliation in the
fibers $\pi^{-1}(y), y\in N.$
\end{lemma}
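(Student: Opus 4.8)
The plan is to reformulate the statement as a pointwise vanishing condition and then collapse it with the K\"{a}hler and Hermitian identities. Saying that $\mathcal{D}^{\bot}$ defines a totally geodesic foliation in the fibers means that for $X,Y\in\mathcal{D}^{\bot}$ the fiber-covariant derivative $\hat{\nabla}_{X}Y$ stays in $\mathcal{D}^{\bot}$; equivalently, its $\mathcal{D}$-component is zero, i.e. $g(\hat{\nabla}_{X}Y,Z)=0$ for every $Z\in\mathcal{D}$. Since $\mathcal{D}^{\bot}$ is integrable (by the preceding integrability lemma) this $\mathcal{D}$-component is the genuine second fundamental form of the leaves and is tensorial, so it suffices to test it on vector fields. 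Because $Z$ is vertical and $\hat{\nabla}_{X}Y=\mathcal{V}\nabla_{X}Y$ by the definition preceding \eqref{testequation}, I would reduce the whole claim to showing $g(\nabla_{X}Y,Z)=0$ for all $X,Y\in\mathcal{D}^{\bot}$ and $Z\in\mathcal{D}$.

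First I would transport everything to the horizontal side using the two structural identities. By the Hermitian property and then the K\"{a}hler condition $(\nabla_{X}J)Y=0$, I have $g(\nabla_{X}Y,Z)=g(J\nabla_{X}Y,JZ)=g(\nabla_{X}JY,JZ)$. Here the anti-holomorphic hypothesis $\mu=\{0\}$ is exactly what I want: it forces $JY\in J(\mathcal{D}^{\bot})=(ker\pi_{*})^{\bot}$ to be \emph{horizontal}, while $JZ\in\mathcal{D}$ remains \emph{vertical}. Decomposing $\nabla_{X}JY=\mathcal{T}_{X}JY+\mathcal{H}\nabla_{X}JY$ by \eqref{testequation} (the analogue of (2.6)) and pairing against the vertical field $JZ$, the horizontal term drops out, leaving $g(\nabla_{X}Y,Z)=g(\mathcal{T}_{X}JY,JZ)$.

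To finish I would invoke Lemma 4.1(a) in the form asserted for anti-holomorphic semi-invariant submersions, namely $\mathcal{T}_{X}JY=J\mathcal{T}_{X}Y$. This gives $g(\mathcal{T}_{X}JY,JZ)=g(J\mathcal{T}_{X}Y,JZ)=g(\mathcal{T}_{X}Y,Z)$ by the Hermitian property, and the right-hand side vanishes because $\mathcal{T}_{X}Y$ is horizontal while $Z$ is vertical. Hence $g(\nabla_{X}Y,Z)=0$, which is precisely the totally geodesic condition for $\mathcal{D}^{\bot}$ inside the fibers.

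The delicate point, and the one I would watch most carefully, is the commutation $\mathcal{T}_{X}JY=J\mathcal{T}_{X}Y$: unwinding $\mathcal{V}(J\nabla_{X}Y)$ shows that the only discrepancy between $\mathcal{T}_{X}JY$ and $J\mathcal{T}_{X}Y$ is $J$ applied to the $\mathcal{D}$-part of $\hat{\nabla}_{X}Y$, which is the very quantity the lemma asserts to be zero. So I must be sure the extension of Lemma 4.1 to this setting is established \emph{independently} rather than assumed, otherwise the argument is circular. The clean way to secure this is to route the last step through the vanishing of $\mathcal{T}$ on the invariant distribution: since $JZ\in\mathcal{D}$, the skew-symmetry of $\mathcal{T}_{X}$ together with the symmetry $\mathcal{T}_{X}JZ=\mathcal{T}_{JZ}X$ of \eqref{testequation} rewrites $g(\mathcal{T}_{X}JY,JZ)=-g(JY,\mathcal{T}_{X}JZ)=-g(JY,\mathcal{T}_{JZ}X)$, which is zero the moment $\mathcal{T}_{JZ}\equiv 0$. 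I therefore expect the real work to live in that auxiliary fact ($\mathcal{T}$ vanishing on $\mathcal{D}$), and I would present the computation above as the short deduction once that input, or equivalently the extended Lemma 4.1, is in hand.
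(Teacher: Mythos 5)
Your computation is, step for step, the paper's own proof: reduce to $g(\hat{\nabla}_{X}Y,Z)=g(\nabla_{X}Y,Z)$, pass to $g(\nabla_{X}JY,JZ)$ by the K\"{a}hler and Hermitian identities, project with (2.6) onto $g(\mathcal{T}_{X}JY,JZ)$, and close with Lemma 4.1 a). What distinguishes your proposal is the caveat in your last paragraph, and you should promote it from a ``delicate point'' to the main point: it is a genuine gap, and it is the paper's gap as well. The paper never proves Lemma 4.1 for anti-holomorphic semi-invariant submersions; it only asserts, right after stating the Lagrangian case, that ``it is easy to show that this lemma also holds'' in this setting. Your own unwinding shows why that assertion cannot be invoked here: for $X,Y\in\mathcal{D}^{\bot}$ one has exactly $\mathcal{T}_{X}JY=J\mathcal{T}_{X}Y+JP$, where $P$ is the $\mathcal{D}$-component of $\hat{\nabla}_{X}Y$, so Lemma 4.1 a), applied to the very fields used in the proof, \emph{is} the statement being proved. (In the Lagrangian case the issue is absent because $J$ interchanges the entire vertical and horizontal distributions, so no $\mathcal{D}$-component exists; it is precisely the invariant part $\mathcal{D}$ that invalidates the transplanted proof.)

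Your proposed repair does not break the circle, and in fact nothing can. Since $\mathcal{T}_{JZ}X$ is horizontal and the horizontal distribution equals $J(\mathcal{D}^{\bot})$, your identity $g(\mathcal{T}_{X}JY,JZ)=-g(JY,\mathcal{T}_{JZ}X)$ shows the lemma is \emph{equivalent} to the auxiliary fact $\mathcal{T}_{\mathcal{D}}\mathcal{D}^{\bot}=0$, not a consequence of an independently available one; in the paper that fact (Theorem 5.7, Corollary 5.8) is itself deduced downstream from the same unproved extension of Lemma 4.1. Worse, the statement is false. Take $q=1$: any Riemannian submersion $\pi$ from a K\"{a}hler surface onto a $1$-manifold is automatically a proper anti-holomorphic semi-invariant submersion, since the definition forces $\mathcal{D}^{\bot}=\mathbb{R}J\nu$ and $\mathcal{D}=(\mathrm{span}\{\nu,J\nu\})^{\bot}$, where $\nu$ is the unit horizontal field; the fibers are real hypersurfaces, and the lemma asserts $\hat{\nabla}_{\xi}\xi=0$ for $\xi=J\nu$, i.e.\ that every fiber is a Hopf hypersurface ($\xi$ a principal direction). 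Generic hypersurfaces are not Hopf: let $\Sigma=\{x_{2}=x_{1}y_{2}\}\subset\mathbb{C}^{2}$ and let $\pi$ be its signed distance function near the origin, so that $\Sigma=\pi^{-1}(0)$. At the origin $\nu=\partial_{x_{2}}$, $\xi=\partial_{y_{2}}$, and a direct computation gives $\nabla_{\xi}\nu=-\partial_{x_{1}}$, hence $\hat{\nabla}_{\xi}\xi=\mathcal{V}(J\nabla_{\xi}\nu)=-\partial_{y_{1}}\neq0$. So the defect you located is not a presentational matter to be settled by citing an ``extended Lemma 4.1'': the extension fails, the lemma fails with it, and the correct conclusion of your analysis is that neither your argument nor the paper's can be completed.
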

\begin{proof} Let $X,Y\in\mathcal{D}^{\bot}$ and $Z\in\mathcal{D}.$ Then using (2.5) and Lemma 4.1, we get
$g(\hat{\nabla}_{X}Y,Z)=g(\nabla_{X}Y,Z)=g(-J\nabla_{X}JY,Z)=g(\nabla_{X}JY,JZ)=g(\mathcal{T}_{X}JY,JZ)\\
\quad\quad\quad=g(J\mathcal{T}_{X}Y,JZ)=g(\mathcal{T}_{X}Y,Z)=0.$
This completes the proof.
\end{proof}

In a similar way, we have the following result.
\begin{lemma} Let $\pi$ be an anti-holomorphic semi-invariant submersion from a K\"{a}hlerian
manifold $(M,g,J)$ onto a Riemannian manifold
$(N,g_{\text{\tiny$N$}})$. Then the invariant distribution
$\mathcal{D}$ defines a totally geodesic foliation in the fibers
$\pi^{-1}(y), y\in N.$
\end{lemma}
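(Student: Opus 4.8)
The plan is to show that for any $Z,W\in\mathcal{D}$ the fiber-intrinsic covariant derivative $\hat{\nabla}_{Z}W=\mathcal{V}\nabla_{Z}W$ again lies in $\mathcal{D}$. Since $ker\pi_{*}=\mathcal{D}\oplus\mathcal{D}^{\bot}$, this is equivalent to verifying that $g(\hat{\nabla}_{Z}W,X)=0$ for every $X\in\mathcal{D}^{\bot}$. This is a direct analogue of the argument given for Lemma 5.4, with the roles of the two subdistributions interchanged: there one checks $g(\hat{\nabla}_{X}Y,Z)=0$ to conclude that $\mathcal{D}^{\bot}$ is totally geodesic in the fibers, while here we test $\hat{\nabla}_{Z}W$ against $\mathcal{D}^{\bot}$.

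First I would replace $\hat{\nabla}_{Z}W$ by $\nabla_{Z}W$ inside the inner product, which is legitimate because $X$ is vertical and the part of $\nabla_{Z}W$ orthogonal to $\hat{\nabla}_{Z}W$ is horizontal. Next I would move $J$ across the connection using the K\"{a}hler condition $(\nabla_{Z}J)W=0$ together with the skew-adjointness $g(JE,F)=-g(E,JF)$ that follows from (3.1), rewriting $g(\nabla_{Z}W,X)=g(-J\nabla_{Z}JW,X)=g(\nabla_{Z}JW,JX)$. Two structural facts make this step meaningful: since $\mathcal{D}$ is invariant, $JW$ is again vertical; and since the submersion is anti-holomorphic, $JX\in J(\mathcal{D}^{\bot})=(ker\pi_{*})^{\bot}$ is horizontal.

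Then, applying the splitting (2.5) to $\nabla_{Z}JW$ (both $Z$ and $JW$ being vertical) and discarding the $\hat{\nabla}$-term when paired against the horizontal vector $JX$, I would obtain $g(\nabla_{Z}JW,JX)=g(\mathcal{T}_{Z}JW,JX)$. Invoking Lemma 4.1 in the form $\mathcal{T}_{Z}JW=J\mathcal{T}_{Z}W$ and using (3.1) once more gives $g(\mathcal{T}_{Z}JW,JX)=g(J\mathcal{T}_{Z}W,JX)=g(\mathcal{T}_{Z}W,X)$. Finally $\mathcal{T}_{Z}W$ is horizontal while $X$ is vertical, so this last pairing vanishes, closing the chain and showing $\hat{\nabla}_{Z}W\in\mathcal{D}$.

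I do not anticipate a genuine obstacle, as the computation parallels Lemma 5.4 line for line. The only point requiring care is the bookkeeping about which vectors are vertical and which are horizontal at each stage; in particular the invariance of $\mathcal{D}$ (which keeps $JW$ vertical) and the anti-holomorphicity $\mu=\{0\}$ (which makes $JX$ horizontal) are precisely the hypotheses that allow the $\mathcal{T}$-terms to be isolated and Lemma 4.1 to be applied.
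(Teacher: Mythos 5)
Your proof is correct and is exactly the argument the paper intends: the paper proves Lemma 5.5 "in a similar way" to Lemma 5.4, and your computation $g(\hat{\nabla}_{Z}W,X)=g(\nabla_{Z}W,X)=g(\nabla_{Z}JW,JX)=g(\mathcal{T}_{Z}JW,JX)=g(J\mathcal{T}_{Z}W,JX)=g(\mathcal{T}_{Z}W,X)=0$ is precisely that mirrored chain, with the roles of $\mathcal{D}$ and $\mathcal{D}^{\bot}$ interchanged and the verticality/horizontality bookkeeping handled correctly.
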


By Lemma 5.4 and 5.5, we have that:
\begin{theorem} Let $\pi$ be an anti-holomorphic semi-invariant submersion from a K\"{a}hlerian
manifold $(M,g,J)$ onto a Riemannian manifold
$(N,g_{\text{\tiny$N$}})$. The the fibers of $\pi$ are locally
product Riemannian manifolds.
\end{theorem}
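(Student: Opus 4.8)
The plan is to combine the two immediately preceding lemmas (Lemma 5.4 and Lemma 5.5) with the orthogonal decomposition of the vertical distribution $ker\pi_{*}=\mathcal{D}\oplus\mathcal{D}^{\bot}$. The fibers $\pi^{-1}(y)$ are the integral submanifolds of $ker\pi_{*}$, and each fiber carries the induced metric making $\hat{\nabla}$ its Levi-Civita connection. The goal is to invoke the de Rham decomposition theorem: a Riemannian manifold whose tangent bundle splits orthogonally into two complementary distributions, each of which is parallel (equivalently, integrable with totally geodesic leaves), is locally a Riemannian product.

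First I would recall that by Lemma 5.2 both $\mathcal{D}$ and $\mathcal{D}^{\bot}$ are integrable, so each defines a foliation on the total manifold; restricted to a fiber these give two complementary, orthogonally-splitting foliations of that fiber. Second, I would note that Lemma 5.4 and Lemma 5.5 say precisely that $\mathcal{D}^{\bot}$ and $\mathcal{D}$ each define a \emph{totally geodesic} foliation \emph{in the fibers}, i.e., with respect to the intrinsic connection $\hat{\nabla}$ of a fiber we have $\hat{\nabla}_{X}Y\in\mathcal{D}^{\bot}$ for $X,Y\in\mathcal{D}^{\bot}$ and $\hat{\nabla}_{Z}W\in\mathcal{D}$ for $Z,W\in\mathcal{D}$. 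Third, since the two distributions are orthogonal complements inside $ker\pi_{*}$ and each leaf is totally geodesic, the splitting $T(\pi^{-1}(y))=\mathcal{D}\oplus\mathcal{D}^{\bot}$ is a parallel orthogonal decomposition for the induced metric. Applying the local de Rham decomposition theorem to this splitting then yields that each fiber is locally isometric to a product of an integral manifold of $\mathcal{D}$ with an integral manifold of $\mathcal{D}^{\bot}$, which is the assertion.

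The step requiring the most care is verifying that the two foliations are totally geodesic \emph{as submanifolds of the fiber} rather than of the ambient $M$, so that the de Rham theorem applies to the intrinsic geometry of $\pi^{-1}(y)$. This is exactly why Lemmas 5.4 and 5.5 are phrased in terms of the fiber connection $\hat{\nabla}=\mathcal{V}\nabla$ and the condition $g(\hat{\nabla}_{X}Y,Z)=0$, rather than in terms of $\nabla$ on $M$. Once one observes that these lemmas already supply the parallelism of the decomposition within each fiber, the theorem follows immediately from de Rham's theorem, so the proof is essentially a citation of that theorem applied to the preceding two lemmas; no further computation is needed.
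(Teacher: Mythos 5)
Your proposal is correct and matches the paper's own (essentially unstated) argument: the paper derives Theorem 5.6 directly from Lemmas 5.4 and 5.5, exactly as you do, via the standard local de Rham decomposition for a metric whose tangent bundle splits into two complementary orthogonal distributions each defining a totally geodesic foliation. Your write-up simply makes explicit the parallelism argument and the citation of de Rham's theorem that the paper leaves implicit.
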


Now, we look more closely at the O' Neill's tensor $\mathcal{T}$ of
the anti-holomorphic semi-invariant submersion $\pi.$ Let $U,V\in
ker\pi_{*}$ and $\xi\in (ker\pi_{*})^{\bot}.$ Since
$(ker\pi_{*})^{\bot}=J(\mathcal{D}^{\bot})$, there is a vertical
vector field $X\in\mathcal{D}^{\bot}$ such that $\xi=JX.$ Then, we
have
$g(\mathcal{T}_{U}V,\xi)=g(\mathcal{T}_{U}V,JX)=-g(J\mathcal{T}_{U}V,X)=-g(\mathcal{T}_{U}JV,X).$
Hence for any $V\in\mathcal{D}$, we get
\begin{equation}\label{testequation}
g(\mathcal{T}_{U}V,\xi)=0.
\end{equation}
From (5.1), we deduce that
\begin{equation}\label{testequation}
\mathcal{T}_{U}\mathcal{D}=0
\end{equation}
for any $U\in ker\pi_{*}.$\\

Thus, using last equation (5.2), we have the following our main
result.
\begin{theorem} Let $\pi$ be an anti-holomorphic semi-invariant submersion from a K\"{a}hlerian
manifold $(M,g,J)$ onto a Riemannian manifold
$(N,g_{\text{\tiny$N$}})$. Then, we have always\\

\quad\quad \quad\textbf{a)}
\quad$\mathcal{T}_{X}Z=0=\mathcal{T}_{Z}X$\quad\quad\quad\textbf{b)}
\quad$\mathcal{T}_{Z}W=0,$\\

where $X\in \mathcal{D}^{\bot}$ and $Z,W\in\mathcal{D}.$
\end{theorem}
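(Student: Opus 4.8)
The plan is to extract everything from equation (5.2), namely $\mathcal{T}_{U}\mathcal{D}=0$ for every $U\in ker\pi_{*}$, together with the symmetry of $\mathcal{T}$ on vertical vectors recorded in (2.3). Since $\mathcal{D}\subset ker\pi_{*}$ and $\mathcal{D}^{\bot}\subset ker\pi_{*}$, all of $X$, $Z$, $W$ appearing in the statement are vertical vector fields, so both of these tools apply directly.

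First I would dispose of part \textbf{b)} and half of part \textbf{a)}. Taking $U=Z\in ker\pi_{*}$ with the argument $W\in\mathcal{D}$ in (5.2) gives $\mathcal{T}_{Z}W=0$, which is exactly \textbf{b)}. In the same way, taking $U=X\in\mathcal{D}^{\bot}\subset ker\pi_{*}$ with the argument $Z\in\mathcal{D}$ in (5.2) yields $\mathcal{T}_{X}Z=0$. Thus the content of (5.2) is precisely that whenever the \emph{argument} of $\mathcal{T}$ lies in $\mathcal{D}$, the result vanishes, irrespective of which vertical field sits in the subscript.

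The remaining equality $\mathcal{T}_{Z}X=0$ is the only one not of that form, since here the $\mathcal{D}$-field $Z$ occupies the subscript while the argument $X$ lies in $\mathcal{D}^{\bot}$. At this point I would invoke (2.3): because $Z$ and $X$ are both vertical, $\mathcal{T}_{Z}X=\mathcal{T}_{X}Z$, and the right-hand side has already been shown to vanish. This closes part \textbf{a)}.

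There is essentially no obstacle left at this stage, because the substantive work---observing that $\mathcal{T}_{U}V$ is horizontal, rewriting $g(\mathcal{T}_{U}V,JX)=-g(\mathcal{T}_{U}JV,X)$ through Lemma 4.1 and the invariance $J(\mathcal{D})=\mathcal{D}$, and deducing (5.1) and hence (5.2)---was carried out before the statement. The only point that genuinely deserves mention is that in every case both fields are vertical, which is exactly what legitimizes the use of the symmetry relation (2.3); without that observation the passage from $\mathcal{T}_{X}Z$ to $\mathcal{T}_{Z}X$ would be unjustified.
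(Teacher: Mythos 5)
Your proposal is correct and follows essentially the same route as the paper, which derives Theorem 5.7 directly from equation (5.2) ($\mathcal{T}_{U}\mathcal{D}=0$ for vertical $U$). Your explicit use of the symmetry (2.3) to obtain $\mathcal{T}_{Z}X=\mathcal{T}_{X}Z=0$ simply spells out the step the paper leaves implicit.
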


At once, from  Theorem 5.7, we easily see that
$\quad\mathcal{T}_{Z}\xi=0$ for any $Z\in\mathcal{D}$ and $\xi\in
(ker\pi_{*})^{\bot}.$ Thus, we have
\begin{cor}
Let $\pi$ be an anti-holomorphic semi-invariant submersion from a
K\"{a}hlerian manifold $(M,g,J)$ onto a Riemannian manifold
$(N,g_{\text{\tiny$N$}})$. Then, we have always
$\mathcal{T}_{Z}\equiv0$ for $Z\in\mathcal{D}$.
\end{cor}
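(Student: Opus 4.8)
The plan is to show that $\mathcal{T}_{Z}$ annihilates every tangent vector by splitting $TM$ into its vertical and horizontal parts and treating each separately, exploiting the skew-symmetry of $\mathcal{T}_{Z}$ to reduce the horizontal case to the vertical one.

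First I would dispose of the vertical directions. Since $ker\pi_{*}=\mathcal{D}\oplus\mathcal{D}^{\bot}$, an arbitrary vertical vector field $V$ decomposes as $V=W+X$ with $W\in\mathcal{D}$ and $X\in\mathcal{D}^{\bot}$. Theorem 5.7(b) gives $\mathcal{T}_{Z}W=0$, while Theorem 5.7(a) gives $\mathcal{T}_{Z}X=0$; by linearity of $\mathcal{T}_{Z}$ we conclude $\mathcal{T}_{Z}V=0$ for every vertical $V$.

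Next I would handle the horizontal directions. Let $\xi\in(ker\pi_{*})^{\bot}$. Because $\mathcal{T}_{Z}$ reverses the vertical and horizontal distributions, $\mathcal{T}_{Z}\xi$ is vertical, so it suffices to test it against an arbitrary vertical field $V$. Using the skew-symmetry of $\mathcal{T}_{Z}$ together with the previous step,
$$g(\mathcal{T}_{Z}\xi,V)=-g(\xi,\mathcal{T}_{Z}V)=0.$$
Since this holds for all vertical $V$ and $\mathcal{T}_{Z}\xi$ is itself vertical, we obtain $\mathcal{T}_{Z}\xi=0$. Equivalently, one may write $\xi=JX$ with $X\in\mathcal{D}^{\bot}$ using $(ker\pi_{*})^{\bot}=J(\mathcal{D}^{\bot})$, and argue directly as in the sentence preceding the statement.

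Finally, combining the two steps and using $TM=ker\pi_{*}\oplus(ker\pi_{*})^{\bot}$, the operator $\mathcal{T}_{Z}$ vanishes on all of $TM$, hence $\mathcal{T}_{Z}\equiv0$. There is essentially no obstacle here: the content is entirely contained in Theorem 5.7, and the only mild point is the passage from the vertical to the horizontal components, which is handled for free by the skew-symmetry of $\mathcal{T}$ noted in Section 2.
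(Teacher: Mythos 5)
Your proof is correct and follows essentially the same route as the paper: the paper also deduces the corollary ``at once'' from Theorem 5.7, first obtaining $\mathcal{T}_{Z}=0$ on vertical vectors and then killing the horizontal directions $\mathcal{T}_{Z}\xi=0$, which you justify via skew-symmetry of $\mathcal{T}_{Z}$ (and note the equivalent argument writing $\xi=JX$, which is the paper's implicit mechanism). No gaps; your write-up merely makes explicit the details the paper leaves to the reader.
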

From the part \textbf{a)} of Theorem 5.7, we have that:
\begin{cor}
Let $\pi$ be an anti-holomorphic semi-invariant submersion from a
K\"{a}hlerian manifold $(M,g,J)$ onto a Riemannian manifold
$(N,g_{\text{\tiny$N$}})$. Then the fibers of $\pi$ are always mixed
totally geodesic.
\end{cor}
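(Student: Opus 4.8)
The plan is to read off the claim directly from part \textbf{a)} of Theorem 5.7. First I would recall what it means for the fibers to be \emph{mixed totally geodesic}: since each fiber $\pi^{-1}(y)$ carries the decomposition $ker\pi_{*}=\mathcal{D}\oplus\mathcal{D}^{\bot}$ of its tangent space, the fiber is called mixed totally geodesic precisely when its second fundamental form vanishes on mixed pairs, that is, when the form applied to $X\in\mathcal{D}^{\bot}$ and $Z\in\mathcal{D}$ is zero.

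Next I would identify the second fundamental form of the fibers with the O'Neill tensor $\mathcal{T}$. Recall from (2.5) that for vertical $U,V$ one has $\nabla_{U}V=\mathcal{T}_{U}V+\hat{\nabla}_{U}V$, where $\hat{\nabla}$ is the induced connection on the fiber and $\mathcal{T}_{U}V=\mathcal{H}\nabla_{U}V$ is the horizontal (hence normal to the fiber) component. Thus $\mathcal{T}$, restricted to vertical arguments, is exactly the second fundamental form of the fibers regarded as submanifolds of $M$. Consequently the mixed totally geodesic condition is equivalent to the vanishing $\mathcal{T}_{X}Z=0$ for all $X\in\mathcal{D}^{\bot}$ and $Z\in\mathcal{D}$.

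Finally, this is precisely what part \textbf{a)} of Theorem 5.7 asserts, namely $\mathcal{T}_{X}Z=0=\mathcal{T}_{Z}X$. Hence the fibers are mixed totally geodesic and the corollary follows with no further computation. I do not anticipate any obstacle here: all the genuine work is already contained in the derivation of (5.1)--(5.2) and Theorem 5.7, where the anti-holomorphicity $(ker\pi_{*})^{\bot}=J(\mathcal{D}^{\bot})$ together with Lemma 4.1 forces the mixed components of $\mathcal{T}$ to vanish. The corollary is merely the translation of that vanishing into the language of the second fundamental form of the fibers.
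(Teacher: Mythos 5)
Your proposal is correct and follows exactly the paper's route: the corollary is stated there as an immediate consequence of part \textbf{a)} of Theorem 5.7, using the standard fact (noted in Section 2) that $\mathcal{T}$ restricted to vertical arguments is the second fundamental form of the fibers. Your spelled-out identification of the mixed totally geodesic condition with $\mathcal{T}_{X}Z=0$ for $X\in\mathcal{D}^{\bot}$, $Z\in\mathcal{D}$ is precisely the translation the paper leaves implicit.
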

From the part \textbf{b)} of Theorem 5.7, we get:
\begin{cor}
Let $\pi$ be an anti-holomorphic semi-invariant submersion from a
K\"{a}hlerian manifold $(M,g,J)$ onto a Riemannian manifold
$(N,g_{\text{\tiny$N$}})$. Then the foliations of the invariant
distribution $\mathcal{D}$ are totally geodesic in the total space
$M.$
\end{cor}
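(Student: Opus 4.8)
The plan is to prove the foliation statement in its differential-geometric form: since $\mathcal{D}$ is integrable by Lemma 5.3\textbf{b)}, it determines a foliation of $M$, and a leaf is a totally geodesic submanifold of $M$ precisely when the ambient covariant derivative of two leaf-tangent fields stays tangent to the leaf. So the whole task reduces to showing that $\nabla_{Z}W\in\mathcal{D}$ for all $Z,W\in\mathcal{D}$. To organize this I would use the two orthogonal decompositions $TM=ker\pi_{*}\oplus(ker\pi_{*})^{\bot}$ and $ker\pi_{*}=\mathcal{D}\oplus\mathcal{D}^{\bot}$, checking separately that $\nabla_{Z}W$ has no horizontal component and no $\mathcal{D}^{\bot}$ component.

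First I would apply the fundamental formula (2.5), writing $\nabla_{Z}W=\mathcal{T}_{Z}W+\hat{\nabla}_{Z}W$, where $\mathcal{T}_{Z}W$ is the horizontal part and $\hat{\nabla}_{Z}W=\mathcal{V}\nabla_{Z}W$ is the vertical part. Part \textbf{b)} of Theorem 5.7 immediately gives $\mathcal{T}_{Z}W=0$, so $\nabla_{Z}W=\hat{\nabla}_{Z}W$ is purely vertical; in particular it has no component along $(ker\pi_{*})^{\bot}=J(\mathcal{D}^{\bot})$. It then remains only to verify that this vertical vector lies in $\mathcal{D}$, i.e. that $g(\hat{\nabla}_{Z}W,X)=0$ for every $X\in\mathcal{D}^{\bot}$. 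This is exactly the assertion of Lemma 5.5, that $\mathcal{D}$ defines a totally geodesic foliation inside the fibers, which says $\hat{\nabla}_{Z}W\in\mathcal{D}$. Combining the two facts yields $\nabla_{Z}W=\hat{\nabla}_{Z}W\in\mathcal{D}$, which is the claim. (Alternatively, the vanishing $g(\nabla_{Z}W,X)=0$ can be checked directly by a K\"{a}hler computation mirroring the proof of Lemma 5.4, using $(\nabla_{E}J)F=0$ and Lemma 4.1, but invoking Lemma 5.5 is more economical.)

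I do not expect a genuine obstacle here; the result is bookkeeping on top of the already established vanishing theorems. The only point deserving care is distinguishing ``totally geodesic in the fibers'', which is the statement $\hat{\nabla}_{Z}W\in\mathcal{D}$ of Lemma 5.5, from the stronger ``totally geodesic in $M$'', which is $\nabla_{Z}W\in\mathcal{D}$. The upgrade from the former to the latter is precisely what Theorem 5.7\textbf{b)} supplies, since it forces the horizontal component $\mathcal{T}_{Z}W$ of $\nabla_{Z}W$ to vanish, so that the in-fiber total geodesy already carries over to the total space.
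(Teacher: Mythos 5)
Your proposal is correct and follows essentially the same route as the paper, which derives this corollary directly from part \textbf{b)} of Theorem 5.7. In fact your write-up is slightly more careful than the paper's one-line attribution: you correctly observe that $\mathcal{T}_{Z}W=0$ only kills the horizontal component of $\nabla_{Z}W$, and you explicitly invoke Lemma 5.5 to handle the $\mathcal{D}^{\bot}$-component, a step the paper leaves implicit.
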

Also from Theorem 5.7, it follows that:
\begin{cor}
Let $\pi$ be an anti-holomorphic semi-invariant submersion from a
K\"{a}hlerian manifold $(M,g,J)$ onto a Riemannian manifold
$(N,g_{\text{\tiny$N$}})$. Then $\mathcal{T}\equiv0$ if and only if
$\mathcal{T}_{X}Y=0$ for all $X,Y\in \mathcal{D}^{\bot}$, i.e.,
$\mathcal{T}_{\mathcal{D}^{\bot}}\mathcal{D}^{\bot}=0.$
\end{cor}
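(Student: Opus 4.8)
The plan is to establish the biconditional by analyzing the action of $\mathcal{T}$ on a full decomposition of the tangent bundle, using the vanishing results already collected in Theorem 5.7. The forward direction is trivial: if $\mathcal{T}\equiv0$, then in particular $\mathcal{T}_{X}Y=0$ for all $X,Y\in\mathcal{D}^{\bot}$. So the entire content lies in the converse, and the strategy is to show that $\mathcal{T}$ is completely determined by its restriction to $\mathcal{D}^{\bot}\times\mathcal{D}^{\bot}$.

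First I would recall that $\mathcal{T}_{E}$ is nontrivial only when $E$ is vertical (by definition \eqref{testequation} of $\mathcal{T}$, it acts through $\mathcal{T}_{\mathcal{V}E}$), so it suffices to compute $\mathcal{T}_{U}F$ for $U\in ker\pi_{*}$ and $F$ an arbitrary vector field on $M$. I would decompose the vertical input as $U=Z+X$ with $Z\in\mathcal{D}$ and $X\in\mathcal{D}^{\bot}$, and the second argument as $F=W+Y+\xi$ with $W\in\mathcal{D}$, $Y\in\mathcal{D}^{\bot}$, and $\xi\in(ker\pi_{*})^{\bot}=J(\mathcal{D}^{\bot})$. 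By linearity, $\mathcal{T}_{U}F$ expands into six terms. The second step is to kill as many of these as possible. By Corollary 5.8, $\mathcal{T}_{Z}\equiv0$, so every term whose first argument lies in $\mathcal{D}$ vanishes; this reduces the problem to computing $\mathcal{T}_{X}W$, $\mathcal{T}_{X}Y$, and $\mathcal{T}_{X}\xi$.

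The remaining work is to handle $\mathcal{T}_{X}W$ and $\mathcal{T}_{X}\xi$. The term $\mathcal{T}_{X}W$ vanishes by Theorem 5.7(a) (with the roles of the $\mathcal{D}$ and $\mathcal{D}^{\bot}$ arguments read off from $\mathcal{T}_{X}Z=0$). For the horizontal term $\mathcal{T}_{X}\xi$, I would write $\xi=JY'$ for some $Y'\in\mathcal{D}^{\bot}$ and use Lemma 4.1(a), which gives $\mathcal{T}_{X}JY'=J\mathcal{T}_{X}Y'$; since $\mathcal{T}_{X}Y'\in\mathcal{D}^{\bot}$, this shows $\mathcal{T}_{X}\xi$ is the image under $J$ of an element of $\mathcal{T}_{\mathcal{D}^{\bot}}\mathcal{D}^{\bot}$. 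Thus under the hypothesis $\mathcal{T}_{X}Y=0$ for all $X,Y\in\mathcal{D}^{\bot}$, both $\mathcal{T}_{X}Y$ and $\mathcal{T}_{X}\xi$ vanish, and every one of the six terms is zero, giving $\mathcal{T}\equiv0$.

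I expect the main obstacle to be purely bookkeeping rather than conceptual: ensuring the decomposition of the tangent bundle into $\mathcal{D}\oplus\mathcal{D}^{\bot}\oplus J(\mathcal{D}^{\bot})$ is exhaustive (this is exactly where the anti-holomorphic hypothesis $\mu=\{0\}$ is used, so that there is no $\mu$-component to account for) and that the symmetry $\mathcal{T}_{U}V=\mathcal{T}_{V}U$ from \eqref{testequation} is invoked correctly when reindexing Theorem 5.7(a). The crucial structural point is that $\mathcal{T}_{X}\xi$ reduces to the $\mathcal{D}^{\bot}$–to–$\mathcal{D}^{\bot}$ values through Lemma 4.1, since $J$ interchanges $\mathcal{D}^{\bot}$ and the whole horizontal space; without this identity one could not rule out a nonzero horizontal contribution independent of $\mathcal{T}_{\mathcal{D}^{\bot}}\mathcal{D}^{\bot}$.
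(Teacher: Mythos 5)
Your proposal is correct, and it fills in a proof the paper itself leaves implicit (the paper merely says the corollary ``follows from Theorem 5.7''). Your overall reduction is the natural one and matches the paper's intent: decompose $TM=\mathcal{D}\oplus\mathcal{D}^{\bot}\oplus J(\mathcal{D}^{\bot})$, note $\mathcal{T}_{E}=\mathcal{T}_{\mathcal{V}E}$, and kill all terms except $\mathcal{T}_{X}Y$ with $X,Y\in\mathcal{D}^{\bot}$ via Theorem 5.7 and Corollary 5.8. Where you genuinely diverge is the horizontal slot: you handle $\mathcal{T}_{X}\xi$ by writing $\xi=JY'$ and invoking Lemma 4.1(a), so your argument leans on the K\"ahlerian structure and the anti-holomorphic condition $\mu=\{0\}$. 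The route the paper implicitly uses (see the sentence right after Theorem 5.7, where $\mathcal{T}_{Z}\xi=0$ is deduced) is the skew-symmetry of $\mathcal{T}_{X}$ stated in Section 2: since $\mathcal{T}_{X}\xi$ is vertical and $g(\mathcal{T}_{X}\xi,V)=-g(\xi,\mathcal{T}_{X}V)=0$ once $\mathcal{T}_{X}$ vanishes on all vertical $V$, the horizontal values come for free. That argument works for any Riemannian submersion and needs no complex structure, so your closing remark --- that without Lemma 4.1 one could not rule out an independent horizontal contribution --- is not accurate; skew-adjointness alone rules it out. Two further small slips, neither fatal: $\mathcal{T}_{X}Y'$ is \emph{horizontal} (it equals $\mathcal{H}\nabla_{X}Y'$), not an element of $\mathcal{D}^{\bot}$ as you parenthetically claim, but your chain $\mathcal{T}_{X}\xi=J\mathcal{T}_{X}Y'=0$ under the hypothesis is unaffected; and your citation of the symmetry $\mathcal{T}_{U}V=\mathcal{T}_{V}U$ is not actually needed anywhere, since Theorem 5.7(a) already records the vanishing in both orders.
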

Thus, we obtain the following result.
\begin{cor}
Let $\pi$ be an anti-holomorphic semi-invariant submersion from a
K\"{a}hlerian manifold $(M,g,J)$ onto a Riemannian manifold
$(N,g_{\text{\tiny$N$}})$. Then $ker\pi_{*}$ defines a totally
geodesic foliation if and only if
\quad$\mathcal{T}_{\mathcal{D}^{\bot}}\mathcal{D}^{\bot}=0.$
\end{cor}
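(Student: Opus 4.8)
The plan is to reduce the totally geodesic foliation condition for the vertical distribution to the vanishing of the full tensor $\mathcal{T}$, and then invoke Corollary~5.11. First I would recall the standard characterization: the vertical distribution $ker\pi_{*}$ defines a totally geodesic foliation precisely when the second fundamental form of each fiber vanishes, that is, when $\mathcal{H}\nabla_{U}V=0$ for all vertical $U,V$. By the decomposition (2.5), namely $\nabla_{U}V=\mathcal{T}_{U}V+\hat{\nabla}_{U}V$, the horizontal component $\mathcal{H}\nabla_{U}V$ is exactly $\mathcal{T}_{U}V$, so the foliation is totally geodesic if and only if $\mathcal{T}_{U}V=0$ for all $U,V\in ker\pi_{*}$.

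Next I would upgrade this to the full statement $\mathcal{T}\equiv0$. Since $\mathcal{T}_{U}$ is skew-symmetric and interchanges the vertical and horizontal distributions, for any vertical $U,V$ and horizontal $\xi$ we have $g(\mathcal{T}_{U}\xi,V)=-g(\xi,\mathcal{T}_{U}V)$. Hence if $\mathcal{T}_{U}V=0$ for all vertical $U,V$, then $\mathcal{T}_{U}\xi$ is a vertical vector orthogonal to every vertical $V$, which forces $\mathcal{T}_{U}\xi=0$ as well; and because $\mathcal{T}_{E}$ depends only on the vertical part of $E$ by its definition (2.1), this yields $\mathcal{T}\equiv0$. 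The reverse implication is trivial.

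Finally, having shown that $ker\pi_{*}$ defines a totally geodesic foliation if and only if $\mathcal{T}\equiv0$, I would simply apply Corollary~5.11, which already establishes that $\mathcal{T}\equiv0$ if and only if $\mathcal{T}_{\mathcal{D}^{\bot}}\mathcal{D}^{\bot}=0$. Chaining these two equivalences gives the claim. I do not anticipate a genuine obstacle here: the content is essentially a repackaging of the standard O'Neill formalism together with the earlier corollary. The only point requiring a moment's care is the skew-symmetry step that promotes $\mathcal{T}_{U}V=0$ to $\mathcal{T}\equiv0$, but this follows immediately from the properties of $\mathcal{T}$ recorded in Section~2.
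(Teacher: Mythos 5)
Your proposal is correct and follows essentially the same route as the paper: the paper deduces this corollary directly from Corollary~5.11 together with the standard fact that the fibers are totally geodesic precisely when $\mathcal{T}$ vanishes, which is exactly the equivalence you establish. Your write-up merely makes explicit the skew-symmetry argument (vanishing of $\mathcal{T}$ on vertical pairs forces $\mathcal{T}\equiv0$) that the paper leaves implicit.
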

Since the O'Neill's tensor $\mathcal{A}\equiv0,$ by Corollary 5.12,
we have the following.
\begin{theorem} Let $\pi$ be an anti-holomorphic semi-invariant submersion from a K\"{a}hlerian
manifold $(M,g,J)$ onto a Riemannian manifold
$(N,g_{\text{\tiny$N$}})$. Then, $M$ is a locally product Riemannian
manifold $M_{ker\pi_{*}}\times M_{(ker\pi_{*})^{\bot}}$ if and only
if \quad$\mathcal{T}_{\mathcal{D}^{\bot}}\mathcal{D}^{\bot}=0.$
\end{theorem}

\section{Totally Geodesicness and Harmonicity of the\\
anti-holomorphic semi-invariant submersion}

In this section, we shall examine the totally geodesicness and
harmonicity of an anti-holomorphic semi-invariant submersion. First
we give a necessary and sufficient condition for an anti-holomorphic
semi-invariant submersion to be a totally geodesic map. Recall that
a smooth map $\varphi$ between two Riemannian manifolds is called
totally geodesic if $\nabla\varphi_{*}=0$ \cite{Ba}.

\begin{theorem} Let $\pi$  be an anti-holomorphic semi-invariant submersion from a K\"{a}hlerian
manifold $(M,g,J)$ onto a Riemannian manifold
$(N,g_{\text{\tiny$N$}})$. Then $\pi$ is a totally geodesic map if
and only if\quad
$\mathcal{T}_{\mathcal{D}^{\bot}}\mathcal{D}^{\bot}=0.$
\end{theorem}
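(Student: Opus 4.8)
The plan is to use the standard characterization that a Riemannian submersion $\pi$ is a \emph{totally geodesic map} if and only if its second fundamental form $(\nabla\pi_*)(E,F)$ vanishes for all $E,F\in TM$. Since this form is symmetric and bilinear over functions, it suffices to test it on the three types of pairs coming from the decomposition $TM=\ker\pi_*\oplus(\ker\pi_*)^{\bot}$: horizontal--horizontal, mixed, and vertical--vertical.

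First I would dispose of the horizontal--horizontal and mixed terms. For basic horizontal $\xi,\eta$ that are $\pi$-related to $X,Y$ on $N$, the defining relation of the pull-back connection $\nabla^{\pi}$ together with $\pi_*(\mathcal{H}\nabla_{\xi}\eta)=\nabla^{N}_{X}Y\circ\pi$ gives $(\nabla\pi_*)(\xi,\eta)=-\pi_*(\mathcal{A}_{\xi}\eta)$, which vanishes because $\mathcal{A}_{\xi}\eta$ is vertical (and in any case $\mathcal{A}\equiv0$ by Theorem 5.3). For horizontal $\xi$ and vertical $V$, using (2.7) and $\pi_*V=0$ I get $(\nabla\pi_*)(\xi,V)=-\pi_*(\mathcal{A}_{\xi}V)$, which is identically zero since $\mathcal{A}\equiv0$.

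The only surviving term is the vertical--vertical one. For vertical $U,V$, since $\pi_*V=0$ and $\nabla_{U}V=\mathcal{T}_{U}V+\hat{\nabla}_{U}V$ by (2.5) with $\hat{\nabla}_{U}V$ vertical, I obtain $(\nabla\pi_*)(U,V)=-\pi_*(\mathcal{T}_{U}V)$. Because $\mathcal{T}_{U}V$ is horizontal and $\pi_*$ is a linear isometry on $(\ker\pi_*)^{\bot}$, this vanishes exactly when $\mathcal{T}_{U}V=0$. Thus $\pi$ is totally geodesic if and only if $\mathcal{T}$ vanishes identically on the fibers.

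Finally I would translate ``$\mathcal{T}_{U}V=0$ for all vertical $U,V$'' into the stated condition. Splitting $U,V$ according to $\ker\pi_*=\mathcal{D}\oplus\mathcal{D}^{\bot}$, Corollary 5.8 kills every term with a $\mathcal{D}$-component in the lower slot, and Theorem 5.7\textbf{(a)} kills the remaining mixed term $\mathcal{T}_{\mathcal{D}^{\bot}}\mathcal{D}$, leaving only $\mathcal{T}_{\mathcal{D}^{\bot}}\mathcal{D}^{\bot}$; equivalently, one may invoke Corollary 5.11 directly, which already asserts $\mathcal{T}\equiv0\iff\mathcal{T}_{\mathcal{D}^{\bot}}\mathcal{D}^{\bot}=0$. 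I expect the only real obstacle to be the bookkeeping for the horizontal--horizontal term, where one must correctly relate $\nabla^{\pi}$ and $\nabla^{N}$ through basic vector fields; every other piece collapses immediately once $\mathcal{A}\equiv0$ and the structure of $\mathcal{T}$ from Theorem 5.7 are in hand.
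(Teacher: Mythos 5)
Your proposal is correct and follows essentially the same route as the paper: you kill the horizontal--horizontal term by the submersion property (the paper cites this as its equation (6.1), which you rederive via basic fields), kill the mixed term using $\mathcal{A}\equiv0$ from Theorem 5.3, reduce the vertical--vertical term to $-\pi_*(\mathcal{T}_UV)$ via (2.5), and then invoke Corollary 5.11 to convert $\mathcal{T}\equiv0$ into $\mathcal{T}_{\mathcal{D}^{\bot}}\mathcal{D}^{\bot}=0$, exactly as the paper does.
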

\begin{proof} Since $\pi$ is a Riemannian submersion, we have
\begin{equation}\label{testequation}
(\nabla\pi_{*})(\xi,\eta)=0
\end{equation}
for all $\xi,\eta\in(ker\pi_{*})^{\bot}.$ For any $U,V\in
ker\pi_{*}$, using (2.5), we get
$(\nabla\pi_{*})(U,V)=-\pi_{*}(\nabla_{U}V)=-\pi_{*}(\mathcal{T}_{U}V+\hat{\nabla}_{U}V)=-\pi_{*}(\mathcal{T}_{U}V),$
since $\pi$ is a linear isometry between $(ker\pi_{*})^{\bot}$ and
$TN.$ Hence, it follows that $(\nabla\pi_{*})(U,V)=0$ if and only if
$\mathcal{T}_{U}V=0,$ for all $U,V\in ker\pi_{*}$, that is;
\begin{equation}\label{testequation}
(\nabla\pi_{*})(U,V)=0\Leftrightarrow\mathcal{T}\equiv0.
\end{equation}
In a similar way, for any $U\in ker\pi_{*}$ and
$\xi\in(ker\pi_{*})^{\bot}$, using (2.7), we get
$(\nabla\pi_{*})(\xi,U)=-\pi_{*}(\nabla_{\xi}U)=-\pi_{*}(\mathcal{A}_{\xi}V+\mathcal{V}{\nabla}_{\xi}U).$
Since $\pi$ is a linear isometry between $(ker\pi_{*})^{\bot}$ and
$TN$ and $\mathcal{A}\equiv0,$ it follows that
\begin{equation}\label{testequation}
(\nabla\pi_{*})(\xi,U)=0
\end{equation}
for any $U\in ker\pi_{*}$ and $\xi\in(ker\pi_{*})^{\bot}.$ Thus,
from (6.1), (6.2) and (6.3), we deduce $\nabla\pi_{*}=0$ if and only
if $\mathcal{T}\equiv0.$ But because of Corollary 5.11, this is
equivalent to the assertion.
\end{proof}
Now, we examine the harmonicity of the submersion. We know that a
smooth map
 $\varphi$ is harmonic if and only if it has minimal fibers \cite{Ba}. Thus
 the submersion $\pi$ is harmonic if and only if $\displaystyle\sum^{2p+q}_{k=1}\mathcal{T}_{v_{k}}v_{k}=0,$
where $\{v_{1},...,v_{2p+q}\}$ is a local orthonormal frame of
$ker\pi_{*}.$ But because of Theorem 5.7, it follows that $\pi$ is
harmonic if and only if
$\displaystyle\sum^{q}_{i=1}\mathcal{T}_{e_{i}}e_{i}=0,$ where
$\{e_{1},...,e_{q}\}$ is a local orthonormal frame of
$\mathcal{D}^{\bot}.$ Next, let $X$  be any non-zero vector field in
$\mathcal{D}^{\bot}.$ Then, for $1\leq i\leq q$, using the
skew-symmetricness of $\mathcal{T}_{E}$, Lemma 4.1 and (2.3), we
have
$g(\mathcal{T}_{e_{i}}e_{i},JX)=-g(\mathcal{T}_{e_{i}}JX,e_{i})=-g(J\mathcal{T}_{e_{i}}X,e_{i})=-g(J\mathcal{T}_{X}e_{i},e_{i}).$
Hence, we get
\begin{equation}\label{testequation}
g(\displaystyle\sum^{q}_{i=1}\mathcal{T}_{e_{i}}e_{i},JX)=-\displaystyle\sum^{q}_{i=1}g(J\mathcal{T}_{X}e_{i},e_{i}).
\end{equation}
for all $X\in\mathcal{D}^{\bot}.$ Thus, from (6.4), we have the
following result.
\begin{theorem} Let $\pi$ be an anti-holomorphic semi-invariant submersion from a K\"{a}hlerian
manifold $(M,g,J)$ onto a Riemannian manifold
$(N,g_{\text{\tiny$N$}})$. Then $\pi$ is harmonic if and only if
\quad$traceJ\mathcal{T}_{X}=0$ for all $X\in\mathcal{D}^{\bot}.$
\end{theorem}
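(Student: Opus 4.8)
The plan is to characterize harmonicity via the minimal-fibers criterion and then translate the resulting trace condition into the stated form using the already-established properties of $\mathcal{T}$. Recall from the discussion immediately preceding Theorem~6.2 that $\pi$ is harmonic if and only if the fibers are minimal, which amounts to $\sum_{k=1}^{2p+q}\mathcal{T}_{v_k}v_k=0$ for a local orthonormal frame $\{v_1,\dots,v_{2p+q}\}$ of $ker\pi_*$. First I would invoke Theorem~5.7, which gives $\mathcal{T}_Z W=0$ for $Z,W\in\mathcal{D}$, to discard the contributions from the invariant part of the frame; only the $\mathcal{D}^\bot$-directions survive, so the condition reduces to $\sum_{i=1}^{q}\mathcal{T}_{e_i}e_i=0$ for an orthonormal frame $\{e_1,\dots,e_q\}$ of $\mathcal{D}^\bot$.

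Next I would test this vector-valued equation against horizontal directions. Since $\mathcal{T}_{E}$ reverses the vertical and horizontal distributions, the vector $\sum_i\mathcal{T}_{e_i}e_i$ is horizontal, and because $\pi$ is anti-holomorphic we have $(ker\pi_*)^\bot=J(\mathcal{D}^\bot)$; hence the whole vector vanishes if and only if $g(\sum_i\mathcal{T}_{e_i}e_i,JX)=0$ for every $X\in\mathcal{D}^\bot$. This is exactly the inner product computed in display~(6.4). The key manipulation there is the chain
\[
g(\mathcal{T}_{e_i}e_i,JX)=-g(\mathcal{T}_{e_i}JX,e_i)=-g(J\mathcal{T}_{e_i}X,e_i)=-g(J\mathcal{T}_{X}e_i,e_i),
\]
which uses skew-symmetry of $\mathcal{T}_{e_i}$, then Lemma~4.1(a) to pull $J$ out of $\mathcal{T}$, and finally the symmetry $\mathcal{T}_UV=\mathcal{T}_VU$ from~(2.3). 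Summing over $i$ yields precisely~(6.4).

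Finally I would read off the conclusion: the right-hand side of~(6.4) is $-\sum_{i=1}^{q}g(J\mathcal{T}_X e_i,e_i)$, which is, up to sign, the trace of the endomorphism $J\mathcal{T}_X$ restricted to the orthonormal frame $\{e_i\}$ of $\mathcal{D}^\bot$. Thus $\pi$ is harmonic if and only if $\operatorname{trace}(J\mathcal{T}_X)=0$ for all $X\in\mathcal{D}^\bot$, as claimed. I do not anticipate a genuine obstacle here, since the heavy lifting was done earlier: Theorem~5.7 already collapses the sum to the anti-invariant directions and Theorem~5.1 makes $\mathcal{A}$ irrelevant. The one point requiring a little care is the bookkeeping of the trace—namely, that summing $g(J\mathcal{T}_X e_i,e_i)$ over a frame of $\mathcal{D}^\bot$ genuinely equals $\operatorname{trace}(J\mathcal{T}_X)$, which is legitimate because $\mathcal{T}_X$ maps $\mathcal{D}^\bot$ into the horizontal space and $J$ maps it back into $\mathcal{D}^\bot$, so the operator $J\mathcal{T}_X$ is a well-defined endomorphism of $\mathcal{D}^\bot$ whose trace is frame-independent.
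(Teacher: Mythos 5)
Your proposal is correct and follows essentially the same route as the paper, whose proof of this theorem is exactly the discussion preceding it: the minimal-fibers criterion, the reduction of the frame sum to $\mathcal{D}^{\bot}$ via Theorem~5.7, and the chain of identities (skew-symmetry of $\mathcal{T}_{e_i}$, Lemma~4.1, and~(2.3)) leading to display~(6.4). Your two added remarks --- that testing against $JX$ suffices because $(ker\pi_{*})^{\bot}=J(\mathcal{D}^{\bot})$, and that $J\mathcal{T}_{X}$ is a genuine endomorphism of $\mathcal{D}^{\bot}$ so its trace is frame-independent --- are points the paper leaves implicit, and they tighten the argument without changing it.
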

\section{The Geometry of Total Manifold and Fibers}
Lastly, we investigate several curvatures of the total manifold and
fibers and give a characterization theorem for this type submersions.
First, we recall that fundamental definitions and notions. Let
$(M,g,J)$ be a K\"{a}hlerian manifold and  $\nabla$ is the
Levi-Civita connection on $M.$ The Riemannian curvature tensor
\cite{Yan} of $(M,g,J)$ is defined by
$\mathrm{R}(E,F)G=\nabla_{[E,F]}G-[\nabla_{E},\nabla_{F}]G$ for
vector fields $E,F$ and $G$ on $M$. We put
$\mathrm{R}(E,F;G,\bar{G})=g(\mathrm{R}(E,F)G,\bar{G})$ where
$\bar{G}$ is a vector field on $M$. The \emph{sectional curvature}
$\mathrm{K}(E,F)$ of the plane $\sigma$ spanned by the orthogonal
unit vector fields $E$ and  $F$, is defined by
\begin{equation}\label{testequation}
\mathrm{K}(E,F)=\mathrm{R}(E,F;E,F).
\end{equation}

The \emph{holomorphic bisectional curvature} \cite{Go} of $M$ is
defined for any pair unit  vector fields $E$ and  $F$ tangent to $M$
by
\begin{equation}\label{testequation}
\mathrm{B}(E,F)=\mathrm{R}(E,JE;F,JF).
\end{equation}
Then the \emph{holomorphic sectional curvature} \cite{Go,Yan} of $M$
is given by
\begin{equation}\label{testequation}
\mathrm{H}(E)=\mathrm{B}(E,E).
\end{equation}
The manifold $M$ is called a \emph{complex space form} if it is of
constant holomorphic sectional curvature. We denote by $(M,g,J)(c)$
a complex space form of constant holomorphic sectional curvature
$c.$ Then the Riemannian curvature tensor $\mathrm{R}$ of
$(M,g,J)(c)$ is given by
\begin{equation}
\begin{array}{c}
\mathrm{R}(E,F)G=\frac{c}{4}\{g(F,G)E-g(E,G)F+g(JF,G)JE \\
\quad\quad\quad\quad\quad\quad\quad\quad\quad\quad\quad\quad\quad-g(JE,G)JF+2g(E,JF)JG\}
\end{array}
\end{equation}
for any vector fields  $E,F$ and $G$ on $M$. Hence, we have
\begin{equation}
\begin{array}{c}
\mathrm{B}(E,F)=\frac{c}{2}\{g(E,E)g(F,F)+(g(E,F))^{2}+(g(E,JF))^{2}\}
\end{array}.
\end{equation}
We note that a K\"{a}hlerian manifold with vanishing holomorphic
sectional curvature is flat \cite{Go,Yan}.\\

In view of the O'Neill's curvature formulas $\{0\}$, $\{1\}$,
$\{2\}$, $\{2'\}$ \cite{O}, Lemma 4.1, Theorem 5.7 and Corollary
5.8, from (7.1), we get the following curvature formulas.

\begin{theorem} Let $\pi$  be an anti-holomorphic semi-invariant submersion from a K\"{a}hlerian
manifold $(M,g,J)$ onto a Riemannian manifold
$(N,g_{\text{\tiny$N$}})$ and let $K, \hat{K}$ and $K_{*}$ be the
sectional curvatures of the total space $M$, fibers and the base
space $N$, respectively. Then
\begin{equation}\label{testequation}
\mathrm{K}(X,Y)=\hat{\mathrm{K}}(X,Y)-g(\mathcal{T}_{X}X,\mathcal{T}_{Y}Y)+\|\mathcal{T}_{X}Y\|^{2},
\end{equation}
\begin{equation}\label{testequation}
\mathrm{K}(X,Z)=\hat{\mathrm{K}}(X,Z),\quad
\end{equation}
\begin{equation}\label{testequation}
\mathrm{K}(Z,W)=\hat{\mathrm{K}}(Z,W),
\end{equation}
\begin{equation}\label{testequation}
\mathrm{K}(X,\xi)=g((\nabla_{\xi}\mathcal{T})_{X}X,\xi)-\|\mathcal{T}_{X}\xi\|^{2},
\end{equation}
\begin{equation}\label{testequation}
\mathrm{K}(Z,\xi)=g((\nabla_{\xi}\mathcal{T})_{Z}Z,\xi)
\end{equation}
\begin{equation}\label{testequation}
\mathrm{K}(\xi,\eta)=\mathrm{K}_{*}(\xi_{*},\eta_{*}),
\end{equation}
where $X,Y\in \mathcal{D}^{\bot},$  $Z,W\in \mathcal{D},$  $\xi,
\eta\in (ker\pi_{*})^{\bot}$, $\xi_{*}=\pi_{*}(\xi),
\eta_{*}=\pi_{*}(\eta)$, and all of them are unit vector fields.
\end{theorem}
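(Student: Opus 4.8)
The plan is to obtain each of the six identities (7.5)--(7.10) by specializing O'Neill's curvature equations $\{0\}$, $\{1\}$, $\{2\}$, $\{2'\}$ of \cite{O} to our situation and then discarding every term that is known to vanish. Two structural facts do almost all of the work. First, by Theorem 5.3 the horizontal O'Neill tensor vanishes identically, $\mathcal{A}\equiv 0$, so every occurrence of $\mathcal{A}$ in the curvature equations drops out. Second, by Theorem 5.7 and Corollary 5.8 the tensor $\mathcal{T}$ annihilates $\mathcal{D}$ from both sides, i.e. $\mathcal{T}_{U}Z=0$ and $\mathcal{T}_{Z}\equiv 0$ for every $U\in\ker\pi_{*}$ and every $Z\in\mathcal{D}$. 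Throughout I would compute $\mathrm{K}(E,F)=\mathrm{R}(E,F;E,F)$ directly from (7.1) for unit orthogonal pairs drawn from the three distributions $\mathcal{D}^{\bot}$, $\mathcal{D}$ and $(\ker\pi_{*})^{\bot}$.

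First I would treat the three purely vertical cases (7.5), (7.6), (7.7). Since $X,Y,Z,W$ all lie in $\ker\pi_{*}$, the relevant equation is the Gauss-type formula $\{0\}$, which, after imposing $\mathcal{A}\equiv 0$, reads $\mathrm{K}(U,V)=\hat{\mathrm{K}}(U,V)-g(\mathcal{T}_{U}U,\mathcal{T}_{V}V)+\|\mathcal{T}_{U}V\|^{2}$. Taking $U,V\in\mathcal{D}^{\bot}$ gives (7.5) with no further simplification. Taking $U=X\in\mathcal{D}^{\bot}$ and $V=Z\in\mathcal{D}$, the term $g(\mathcal{T}_{X}X,\mathcal{T}_{Z}Z)$ vanishes because $\mathcal{T}_{Z}\equiv 0$, and $\|\mathcal{T}_{X}Z\|^{2}=0$ by Theorem 5.7 \textbf{a)}, yielding (7.6). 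Taking $U=Z$ and $V=W\in\mathcal{D}$, both correction terms vanish for the same reasons, giving (7.7).

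Next come the mixed cases (7.8) and (7.9), for which the relevant equations are $\{2\}$ and $\{2'\}$ with one vertical and one horizontal argument. After setting $\mathcal{A}\equiv 0$, these reduce to $\mathrm{K}(U,\xi)=g((\nabla_{\xi}\mathcal{T})_{U}U,\xi)-\|\mathcal{T}_{U}\xi\|^{2}$ for $U$ vertical and $\xi$ horizontal. With $U=X\in\mathcal{D}^{\bot}$ this is precisely (7.8); with $U=Z\in\mathcal{D}$ the square-norm term vanishes since $\mathcal{T}_{Z}\equiv 0$, leaving (7.9). Finally, in the purely horizontal case (7.10) the relevant equation expresses $\mathrm{K}(\xi,\eta)$ through the base curvature $\mathrm{K}_{*}(\xi_{*},\eta_{*})$ and a correction $3\|\mathcal{A}_{\xi}\eta\|^{2}$; since $\mathcal{A}\equiv 0$ this correction disappears and, $\pi$ being a Riemannian submersion so that $\pi_{*}$ restricts to an isometry on $(\ker\pi_{*})^{\bot}$, we obtain $\mathrm{K}(\xi,\eta)=\mathrm{K}_{*}(\xi_{*},\eta_{*})$.

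The individual steps are entirely routine substitutions once the vanishing relations are in hand; no conceptual difficulty arises. The only point requiring genuine care is the bookkeeping of signs: the paper adopts the convention $\mathrm{R}(E,F)G=\nabla_{[E,F]}G-[\nabla_{E},\nabla_{F}]G$, and one must transcribe the signs of O'Neill's equations consistently into this convention \emph{before} deleting the vanishing terms. I expect this sign-matching, rather than any structural step, to be the main obstacle.
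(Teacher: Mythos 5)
Your proposal is correct and follows essentially the same route as the paper: the paper gives no written-out proof for this theorem, stating only that the formulas follow from O'Neill's curvature equations $\{0\}$, $\{1\}$, $\{2\}$, $\{2'\}$ together with Lemma 4.1, Theorem 5.7 and Corollary 5.8, which is precisely the substitution argument you carry out (specializing O'Neill's equations and deleting terms via $\mathcal{A}\equiv 0$ and the vanishing of $\mathcal{T}$ on $\mathcal{D}$). The only cosmetic slip is your remark that equation $\{0\}$ simplifies ``after imposing $\mathcal{A}\equiv 0$''---the Gauss-type equation $\{0\}$ contains no $\mathcal{A}$ terms to begin with---but this does not affect the argument.
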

From (7.6), (7.7) and (7.8), we have the following result.
\begin{cor}
Let $\pi$ be an anti-holomorphic semi-invariant submersion from a
K\"{a}hlerian manifold $(M,g,J)$ onto a Riemannian manifold
$(N,g_{\text{\tiny$N$}})$. Then any fiber of $\pi^{-1}(y)$ of $\pi$
has constant sectional curvature if and only if
$g(\mathcal{T}_{X}X,\mathcal{T}_{Y}Y)=\|\mathcal{T}_{X}Y\|^{2}$ for
all $X,Y\in \mathcal{D}^{\bot}.$
\end{cor}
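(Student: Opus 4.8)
The plan is to extract everything from the curvature formulas already collected in Theorem 7.1. Every $2$-plane tangent to a fibre $\pi^{-1}(y)$ is spanned by a pair of vertical unit vectors, and relative to the splitting $\ker\pi_{*}=\mathcal{D}\oplus\mathcal{D}^{\bot}$ an adapted such plane is of one of the three types covered by (7.6), (7.7), (7.8): spanned by $X,Y\in\mathcal{D}^{\bot}$, by $X\in\mathcal{D}^{\bot}$ and $Z\in\mathcal{D}$, or by $Z,W\in\mathcal{D}$. So the first step is to rewrite (7.6) as
\[
\hat{\mathrm{K}}(X,Y)=\mathrm{K}(X,Y)+g(\mathcal{T}_{X}X,\mathcal{T}_{Y}Y)-\|\mathcal{T}_{X}Y\|^{2},
\]
and to note that (7.7) and (7.8) carry no $\mathcal{T}$-correction whatsoever, since Corollary 5.8 gives $\mathcal{T}_{Z}\equiv0$ for $Z\in\mathcal{D}$; hence $\hat{\mathrm{K}}(X,Z)=\mathrm{K}(X,Z)$ and $\hat{\mathrm{K}}(Z,W)=\mathrm{K}(Z,W)$ hold unconditionally.

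The heart of the argument is then the observation that the stated identity $g(\mathcal{T}_{X}X,\mathcal{T}_{Y}Y)=\|\mathcal{T}_{X}Y\|^{2}$ is, by the displayed equation, precisely the condition that the correction term vanishes, i.e. that $\hat{\mathrm{K}}(X,Y)=\mathrm{K}(X,Y)$ for all $X,Y\in\mathcal{D}^{\bot}$. For the ``if'' direction I would substitute the hypothesis into (7.6) and combine it with (7.7), (7.8) to conclude that the fibre sectional curvature agrees with the ambient sectional curvature on every adapted plane; for the ``only if'' direction I would test the constancy assumption against the three formulas and isolate the discrepancy $g(\mathcal{T}_{X}X,\mathcal{T}_{Y}Y)-\|\mathcal{T}_{X}Y\|^{2}$, forcing it to zero.

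The step I expect to be the main obstacle is the passage from control of $\hat{\mathrm{K}}$ on the three adapted plane types to genuine constancy of the fibre sectional curvature. Formulas (7.6)--(7.8) only evaluate $\hat{\mathrm{K}}$ on planes that respect the decomposition $\mathcal{D}\oplus\mathcal{D}^{\bot}$, whereas a general tangent $2$-plane of the fibre is spanned by vectors each mixing components from $\mathcal{D}$ and $\mathcal{D}^{\bot}$, and sectional curvatures on coordinate planes do not by themselves determine those on all planes. I would resolve this either by reading the conclusion as the comparison ``$\hat{\mathrm{K}}=\mathrm{K}$ on the adapted planes'' (the sense in which the three cited equations give an immediate equivalence), or, for a literal proof of constancy, by upgrading the three scalar identities to the full O'Neill curvature formula $\{0\}$ so as to recover the fibre curvature tensor $\hat{\mathrm{R}}$ and then polarising; only this last recovery requires genuine computation, the remainder being direct substitution.
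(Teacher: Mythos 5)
Your proposal is correct and is essentially the paper's own proof: the paper offers nothing beyond the sentence ``From (7.6), (7.7) and (7.8), we have the following result,'' which is precisely the substitution you carry out (the stated identity is the vanishing of the correction term in (7.6), while (7.7) and (7.8) carry no correction because $\mathcal{T}_{Z}\equiv0$ for $Z\in\mathcal{D}$). Your caveat about the word ``constant'' is also warranted: what (7.6)--(7.8) actually deliver is the equivalence of the $\mathcal{T}$-identity with $\hat{\mathrm{K}}=\mathrm{K}$ on planes adapted to $\ker\pi_{*}=\mathcal{D}\oplus\mathcal{D}^{\bot}$, so the defect you flag lies in the paper's wording of the statement, not in your argument, and the reading you adopt is the only one the paper's ``proof'' supports.
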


\begin{theorem} Let $\pi$  be an anti-holomorphic semi-invariant submersion from a K\"{a}hlerian
manifold $(M,g,J)$ onto a Riemannian manifold
$(N,g_{\text{\tiny$N$}})$ and let $B$ and $\hat{B}$ be the
holomorphic bisectional curvatures of the total space $M$ and
fibers, respectively. Then
\begin{equation}\label{testequation}
\mathrm{B}(X,Y)=g((\nabla_{JX}\mathcal{T})_{X}Y,JY)-g(\mathcal{T}_{X}X,\mathcal{T}_{Y}Y),
\end{equation}
\begin{equation}\label{testequation}
\mathrm{B}(X,Z)=0,
\end{equation}
\begin{equation}\label{testequation}
\mathrm{B}(Z,W)=\hat{\mathrm{B}}(Z,W),
\end{equation}
\begin{equation}\label{testequation}
\mathrm{B}(X,\xi)=-g((\nabla_{JX}\mathcal{T})_{X}J\xi,\xi)+g(\mathcal{T}_{X}JX,\mathcal{T}_{J\xi}\xi),
\end{equation}
\begin{equation}\label{testequation}
\mathrm{B}(Z,\xi)=g((\nabla_{Z}\mathcal{T})_{JZ}J\xi,\xi)-g((\nabla_{JZ}\mathcal{T})_{Z}J\xi,\xi),
\end{equation}
\begin{equation}\label{testequation}
\mathrm{B}(\xi,\eta)=g((\nabla_{\xi}\mathcal{T})_{J\xi}J\eta,\eta)-g(\mathcal{T}_{J\xi}\xi,\mathcal{T}_{J\eta}\eta),
\end{equation}
where $X,Y\in \mathcal{D}^{\bot},$  $Z,W\in \mathcal{D},$  $\xi,
\eta\in (ker\pi_{*})^{\bot}$,
 and all of them are unit vector fields.
\end{theorem}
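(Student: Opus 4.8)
The plan is to read each bisectional curvature directly off its definition (7.2), $\mathrm{B}(E,F)=\mathrm{R}(E,JE;F,JF)=g(\mathrm{R}(E,JE)F,JF)$, by substituting the appropriate O'Neill curvature formula $\{0\},\{1\},\{2\},\{2'\}$ and then discarding whatever vanishes by the results of Section 5. The structural observation that organizes the whole computation is that, under the anti-holomorphic hypothesis $\mu=\{0\}$, the almost complex structure $J$ carries $\mathcal{D}^{\bot}$ isomorphically onto the horizontal bundle $(ker\pi_*)^{\bot}$, carries $(ker\pi_*)^{\bot}$ back onto $\mathcal{D}^{\bot}$, and preserves $\mathcal{D}$. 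Consequently, once $E$ and $F$ are placed in prescribed distributions, one reads off at once whether each of the four entries $E,JE,F,JF$ is vertical or horizontal, and this determines which O'Neill formula governs $\mathrm{R}(E,JE;F,JF)$. This is exactly the scheme already used for the sectional curvatures in Theorem 7.1.

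First I would dispose of the cases in which the four entries lie almost entirely inside $ker\pi_*$. For $Z,W\in\mathcal{D}$ every entry of $\mathrm{B}(Z,W)=\mathrm{R}(Z,JZ;W,JW)$ lies in $\mathcal{D}$, so formula $\{0\}$ (the Gauss equation of a fiber) applies; each of its $\mathcal{T}$-correction terms carries a $\mathcal{D}$-vector in a $\mathcal{T}$-slot and is therefore annihilated by Corollary 5.8, leaving precisely $\hat{\mathrm{B}}(Z,W)$ and hence (7.16). For $X\in\mathcal{D}^{\bot}$, $Z\in\mathcal{D}$ the entries of $\mathrm{B}(X,Z)=\mathrm{R}(X,JX;Z,JZ)$ are three vertical ($X,Z,JZ$) and one horizontal ($JX$), so formula $\{1\}$ applies; after expanding its two covariant-derivative terms, the identity $\mathcal{T}_Z\equiv0$ (Corollary 5.8) together with the total geodesicity of $\mathcal{D}$ in the fibers (Lemma 5.5, which keeps $\hat\nabla_ZZ$ and $\hat\nabla_{JZ}Z$ inside $\mathcal{D}$) forces every surviving summand to have a $\mathcal{D}$-vector in a $\mathcal{T}$-slot, whence $\mathrm{B}(X,Z)=0$, which is (7.15).

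The remaining identities (7.14) and (7.17)--(7.19) are the genuinely mixed ones, where $J$ interleaves two vertical and two horizontal entries (three vertical and one horizontal in the case of (7.18)). Here I would first use the antisymmetry of $\mathrm{R}$ in each pair and its pair-interchange symmetry to rearrange $\mathrm{R}(E,JE;F,JF)$ until the entries occupy the slot pattern (horizontal, vertical; horizontal, vertical) required by $\{2\}$ (respectively the pattern of $\{1\}$ for (7.18)). Setting $\mathcal{A}\equiv0$ by Theorem 5.3 kills every $\mathcal{A}$-term, so each formula collapses to a single $(\nabla\mathcal{T})$-term and a single quadratic $\mathcal{T}$-term. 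The quadratic terms are then brought into the stated shape by Lemma 4.1 and the $J$-invariance (3.1) of $g$: for instance $\mathcal{T}_XJX=J\mathcal{T}_XX$ gives $g(\mathcal{T}_XJX,\mathcal{T}_YJY)=g(\mathcal{T}_XX,\mathcal{T}_YY)$, the second summand of (7.14), and the same substitution produces the $\mathcal{T}_{J\xi}\xi$ and $\mathcal{T}_{J\eta}\eta$ terms appearing in (7.17)--(7.19).

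I expect the only genuine difficulty to be the slot-and-sign bookkeeping, not the simplifications. Because $J$ sends $E$ and $F$ into the distributions opposite to their own, the vertical and horizontal entries are interleaved inside each single expression $\mathrm{R}(E,JE;F,JF)$, so reaching O'Neill's canonical ordering takes several symmetry moves, and one must keep $(\nabla_{\text{horizontal}}\mathcal{T})$ carefully distinct from $(\nabla_{\text{vertical}}\mathcal{T})$. This is aggravated by the curvature sign convention fixed at the start of Section 7, namely $\mathrm{R}(E,F)G=\nabla_{[E,F]}G-[\nabla_E,\nabla_F]G$, which is opposite to the convention in which O'Neill's formulas are usually recorded; each formula must be transcribed with this flip in mind so that the final signs in (7.14)--(7.19) come out correctly. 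Once the ordering and signs are settled, what remains is routine: repeated appeals to $\mathcal{A}\equiv0$, Theorem 5.7, Corollary 5.8 and Lemma 4.1.
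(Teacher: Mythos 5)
Your proposal matches the paper's proof: the paper likewise obtains all formulas except $\mathrm{B}(X,Z)=0$ by substituting O'Neill's curvature formulas $\{0\},\{1\},\{2\},\{2'\}$ into the definition (7.2) and simplifying via $\mathcal{A}\equiv0$, Theorem 5.7, Corollary 5.8 and Lemma 4.1, exactly as in Theorem 7.1; and for the special case $\mathrm{B}(X,Z)=0$ it uses precisely your argument, namely formula $\{1\}$, expansion of the $(\nabla\mathcal{T})$-terms, Lemma 5.5 to keep $\hat{\nabla}_{Z}JZ,\hat{\nabla}_{JZ}Z$ in $\mathcal{D}$, and Theorem 5.7 to kill the resulting $\mathcal{T}$-terms. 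Your additional care about the curvature sign convention is a sensible refinement but does not change the route.
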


\begin{proof} From (7.2), using similar arguments which used in Theorem 7.1,  we
get all curvature formulas above expect (7.13). Next, we prove
(7.13). Using the O' Neill's formula $\{1\}$ \cite{O}, we have
\begin{equation}\label{testequation}
B(X,Z)=g((\nabla_{JZ}\mathcal{T})_{Z}X,JX)-g((\nabla_{Z}\mathcal{T})_{JZ}X,JX)
\end{equation}
for unit vector fields $X\in \mathcal{D}^{\bot}$ and  $Z\in
\mathcal{D}.$ After some calculation, from (7.18), we get
\begin{equation}\label{testequation}
B(X,Z)=g(\mathcal{T}_{\hat{\nabla}_{Z}JZ}X-\mathcal{T}_{\hat{\nabla}_{JZ}Z}X,JX).
\end{equation}
Because of Lemma 5.5, we know that $\hat{\nabla}_{Z}JZ,
\hat{\nabla}_{JZ}Z\in\mathcal{D}.$ Hence, by Theorem 5.7, we find
$\mathcal{T}_{\hat{\nabla}_{Z}JZ}X=\mathcal{T}_{\hat{\nabla}_{JZ}Z}X=0.$
Thus, (7.19) gives $\mathrm{B}(X,Z)=0.$
\end{proof}

We have immediately from Theorem 7.3 that:

\begin{cor} Let $\pi$  be an anti-holomorphic semi-invariant submersion from a K\"{a}hlerian
manifold $(M,g,J)$ onto a Riemannian manifold
$(N,g_{\text{\tiny$N$}})$ and let $\mathrm{H}$ and
$\hat{\mathrm{H}}$ be the holomorphic sectional curvatures of the
total space $M$ and fibers, respectively. Then
\begin{equation}\label{testequation}
\mathrm{H}(X)=g((\nabla_{JX}\mathcal{T})_{X}X,JX)-\|\mathcal{T}_{X}X\|^{2}
\end{equation}
\begin{equation}\label{testequation}
\mathrm{H}(Z)=\hat{\mathrm{H}}(Z),
\end{equation}
\begin{equation}\label{testequation}
\mathrm{H}(\xi)=g((\nabla_{\xi}\mathcal{T})_{J\xi}J\xi,\xi)-\|\mathcal{T}_{J\xi}\xi\|^{2}
\end{equation}
where $X\in \mathcal{D}^{\bot},$  $Z\in \mathcal{D}$ and $\xi\in
(ker\pi_{*})^{\bot}$ and all of them are unit vector fields.
\end{cor}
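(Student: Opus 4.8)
The plan is to obtain the three holomorphic sectional curvature identities as direct specializations of the holomorphic bisectional curvature formulas already established in Theorem 7.3. The key observation is the definition $\mathrm{H}(E)=\mathrm{B}(E,E)$ recorded in (7.3): each formula of Theorem 7.3 evaluates $\mathrm{B}$ on a pair of unit vectors taken from a single distribution, so I expect to read off $\mathrm{H}$ on one unit vector simply by identifying the two arguments and collapsing the resulting expression.

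First I would handle the case $X\in\mathcal{D}^{\bot}$. Setting $Y=X$ in (7.12) gives $\mathrm{B}(X,X)=g((\nabla_{JX}\mathcal{T})_{X}X,JX)-g(\mathcal{T}_{X}X,\mathcal{T}_{X}X)$, and rewriting the last term as $\|\mathcal{T}_{X}X\|^{2}$ together with $\mathrm{H}(X)=\mathrm{B}(X,X)$ yields (7.20). Next, for $Z\in\mathcal{D}$, putting $W=Z$ in (7.14) gives $\mathrm{B}(Z,Z)=\hat{\mathrm{B}}(Z,Z)$; since the holomorphic sectional curvature of the fibers satisfies $\hat{\mathrm{H}}(Z)=\hat{\mathrm{B}}(Z,Z)$ by the very definition (7.3) read inside the fiber, this is exactly (7.21). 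Finally, for $\xi\in(ker\pi_{*})^{\bot}$, setting $\eta=\xi$ in (7.17) produces $\mathrm{B}(\xi,\xi)=g((\nabla_{\xi}\mathcal{T})_{J\xi}J\xi,\xi)-g(\mathcal{T}_{J\xi}\xi,\mathcal{T}_{J\xi}\xi)$, and collapsing the last term into $\|\mathcal{T}_{J\xi}\xi\|^{2}$ gives (7.22).

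Because the corollary is a pure specialization, there is no genuine analytic obstacle; the only points requiring care are bookkeeping checks that the hypotheses of Theorem 7.3 survive the substitution. One must verify that the vectors appearing remain unit vectors, which follows at once from the almost Hermitian compatibility $g(E,F)=g(JE,JF)$ in (3.1), guaranteeing $\|JE\|=\|E\|$. The single step deserving a word of comment is the fiber identity (7.21): I would note that $\hat{\mathrm{B}}$ and $\hat{\mathrm{H}}$ are defined intrinsically on each fiber, and that for $Z\in\mathcal{D}$ the vector $JZ$ is again vertical, since $J(\mathcal{D})=\mathcal{D}\subset ker\pi_{*}$ by the definition of a semi-invariant submersion. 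Hence $\hat{\mathrm{H}}(Z)=\hat{\mathrm{B}}(Z,Z)$ is legitimate, and the passage from $\mathrm{B}(Z,Z)=\hat{\mathrm{B}}(Z,Z)$ to $\mathrm{H}(Z)=\hat{\mathrm{H}}(Z)$ is immediate.
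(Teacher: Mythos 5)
Your proposal is correct and is exactly the paper's argument: the paper derives this corollary ``immediately from Theorem 7.3'' by setting the two arguments equal in the bisectional curvature formulas (7.12), (7.14), (7.17) and invoking $\mathrm{H}(E)=\mathrm{B}(E,E)$ from (7.3), just as you do. Your additional bookkeeping remarks (that $J$ preserves norms and that $JZ$ stays vertical for $Z\in\mathcal{D}$) are sound and only make explicit what the paper leaves implicit.
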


 With the help of (7.4) and (7.5), from  (7.13), we have the following result.

\begin{theorem} Let $\pi$  be a proper anti-holomorphic semi-invariant submersion from a complex space form
$(M,g,J)(c)$ onto a Riemannian manifold $(N,g_{\text{\tiny$N$}})$,
then $c=0.$ In other word, the total space is flat. In particular, there exists no proper
anti-holomorphic semi-invariant submersion from a complex space form
$(M,g,J)(c)$ with $c\neq0.$
\end{theorem}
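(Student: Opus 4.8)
The plan is to play the purely algebraic formula (7.5) for the holomorphic bisectional curvature of a complex space form against the vanishing relation (7.13) coming from the submersion structure, and read off a contradiction unless $c=0$. The whole argument is short; its force lies entirely in combining these two facts, and the hint in the text (``With the help of (7.4) and (7.5), from (7.13)\dots'') points exactly here.

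First I would use the hypothesis that $\pi$ is \emph{proper}, i.e.\ $p\neq 0$ and $q\neq 0$. Since $\dim\mathcal{D}=2p\geq 2$ and $\dim\mathcal{D}^{\bot}=q\geq 1$, both the invariant distribution $\mathcal{D}$ and the anti-invariant distribution $\mathcal{D}^{\bot}$ are non-trivial, so I may choose unit vector fields $Z\in\mathcal{D}$ and $X\in\mathcal{D}^{\bot}$. This is the only place where properness is used, and it is essential: without it one of the two distributions could be empty and the key vectors would not exist.

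Next I would evaluate the complex-space-form expression (7.5) on the pair $(X,Z)$. Since $X$ and $Z$ are unit, $g(X,X)=g(Z,Z)=1$. Because $\mathcal{D}$ and $\mathcal{D}^{\bot}$ are orthogonal complements inside $ker\pi_{*}$, we have $g(X,Z)=0$. Moreover, since $\mathcal{D}$ is $J$-invariant (that is, $J(\mathcal{D})=\mathcal{D}$), the field $JZ$ again lies in $\mathcal{D}$, so $g(X,JZ)=0$ as well. Substituting these three facts into (7.5) collapses it to $\mathrm{B}(X,Z)=\tfrac{c}{2}$. On the other hand, $X\in\mathcal{D}^{\bot}$ and $Z\in\mathcal{D}$ are precisely the vectors for which Theorem 7.3 gives the identity (7.13), namely $\mathrm{B}(X,Z)=0$. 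Equating the two evaluations yields $\tfrac{c}{2}=0$, hence $c=0$.

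Finally I would invoke the remark recorded just after (7.5): a K\"{a}hlerian manifold of constant (here, zero) holomorphic sectional curvature is flat. Thus $(M,g,J)(0)$ is flat, which is the asserted conclusion, and the ``in particular'' clause is simply the contrapositive---no proper anti-holomorphic semi-invariant submersion can exist when $c\neq 0$. I do not anticipate a genuine obstacle in this proof; the only subtle point is the bookkeeping guaranteeing that the vectors $X,Z$ chosen by properness are exactly those to which both (7.5) and (7.13) legitimately apply, after which the two curvature values must agree and force $c=0$.
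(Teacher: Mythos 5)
Your proposal is correct and coincides with the argument the paper intends: its entire ``proof'' is the one-line hint you quoted, namely evaluating (7.5) on unit vectors $X\in\mathcal{D}^{\bot}$, $Z\in\mathcal{D}$ (which exist by properness, with $g(X,Z)=g(X,JZ)=0$ by orthogonality and $J$-invariance of $\mathcal{D}$) to get $\mathrm{B}(X,Z)=\tfrac{c}{2}$, contradicting (7.13) unless $c=0$, and then invoking the remark that a K\"{a}hlerian manifold of vanishing holomorphic sectional curvature is flat. There is nothing to add or correct.
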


From Theorem 7.5, we deduce that:

\begin{theorem} Let $\pi$ be an anti-holomorphic semi-invariant submersion from a complex space form
$(M,g,J)(c)$ with $c\neq0$ onto a Riemannian manifold
$(N,g_{\text{\tiny$N$}})$, then $\pi$ is either an anti-invariant
submersion (Lagrangian case) or an almost Hermitian submersion
(K\"{a}hlerian case).
\end{theorem}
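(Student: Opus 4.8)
The plan is to obtain Theorem 7.6 as an immediate logical consequence of Theorem 7.5. Recall that, by definition, $\pi$ is \emph{proper} exactly when $p \neq 0$ and $q \neq 0$, where $2p = dim(\mathcal{D})$ and $q = dim(\mathcal{D}^{\bot})$. Theorem 7.5 asserts that no proper anti-holomorphic semi-invariant submersion exists over a complex space form with $c \neq 0$. Thus, under the hypothesis $c \neq 0$, the submersion $\pi$ fails to be proper, and the negation of the conjunction ``$p \neq 0$ and $q \neq 0$'' yields the dichotomy $p = 0$ or $q = 0$. The entire argument then reduces to matching each of these two boundary cases with one of the two named classes of submersion.

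In the case $p = 0$ one has $\mathcal{D} = \{0\}$, so the vertical distribution coincides with its anti-invariant part, $ker\pi_{*} = \mathcal{D}^{\bot}$. The defining identity $(ker\pi_{*})^{\bot} = J(\mathcal{D}^{\bot})$ then becomes $(ker\pi_{*})^{\bot} = J(ker\pi_{*})$, so $J$ carries the vertical distribution onto the horizontal one; this is precisely the anti-invariant (Lagrangian) situation recalled in the introduction. In the case $q = 0$ one has $\mathcal{D}^{\bot} = \{0\}$, hence $ker\pi_{*} = \mathcal{D}$ is $J$-invariant. Since $J$ is a $g$-isometry preserving the orthogonal splitting $TM = ker\pi_{*} \oplus (ker\pi_{*})^{\bot}$, the horizontal distribution is invariant as well, which is exactly the defining condition for an almost Hermitian (K\"{a}hlerian) submersion in Watson's sense.

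I do not anticipate any genuine obstacle here: once Theorem 7.5 is available, the proof is nothing more than unwinding the definition of properness and reading off the standard name attached to each degenerate configuration. The only point demanding a little bookkeeping is verifying the two identifications above, but each follows at once from $(ker\pi_{*})^{\bot} = J(\mathcal{D}^{\bot})$ together with the fact that $J$ is a complex structure compatible with $g$.
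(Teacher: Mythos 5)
Your proposal is correct and follows exactly the paper's route: the paper deduces Theorem 7.6 directly from Theorem 7.5 by negating properness ($p=0$ or $q=0$) and identifying the two degenerate cases with the Lagrangian and almost Hermitian submersions, respectively. Your write-up simply makes explicit the case-matching that the paper leaves implicit.
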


\bibliographystyle{amsplain}

\end{document}